\newtheorem{theorem}{Theorem}
\newtheorem{proposition}{Proposition}
\newtheorem{lemma}{Lemma}
\newtheorem{definition}{Definition}
\newtheorem{corollary}{Corollary}
\newtheorem{remark}{Remark}
\newcommand{\newword}[1]{\textbf{\emph{#1}}}
\newcommand{\RR}{\mathbb{R}}
\newcommand{\M}{\mathcal{M}}
\newcommand{\B}{\mathcal{B}}
\newcommand{\I}{\mathcal{I}}
\newcommand{\intersect}{\cap}
\newcommand{\union}{\cup}
\DeclarePairedDelimiter\parentheses{\lparen}{\rparen}
\newcommand{\rk}[1]{\operatorname{rk} \parentheses*{#1}}
\newcommand{\nbd}[1]{\operatorname{nbd} \parentheses*{#1}}
\newcommand{\minn}[1]{\operatorname{min} \parentheses*{#1}}
\newcommand{\maxx}[1]{\operatorname{max} \parentheses*{#1}}
\newcommand{\cw}[1]{\operatorname{cw} \parentheses*{#1}}
\newcommand{\ccw}[1]{\operatorname{ccw} \parentheses*{#1}}
\newcommand{\minelts}[1]{\operatorname{minelts} \parentheses*{#1}}
\begin{document}

\title{The rank function of a positroid and non-crossing partitions.}

\author{Robert Mcalmon and Suho Oh}
\maketitle




\begin{abstract}
A positroid is a special case of a realizable matroid, that arose from the study of totally nonnegative part of the Grassmannian by Postnikov \cite{Postnikov}. Postnikov demonstrated that positroids are in bijection with certain interesting classes of combinatorial objects, such as Grassmann necklaces and decorated permutations. The bases of a positroid can be described directly in terms of the Grassmann necklace and decorated permutation \cite{Oh}. In this paper, we show that the rank of an arbitrary set in a positroid can be computed directly from the associated decorated permutation using non-crossing partitions. 
 
\end{abstract}

\section{Introduction}

A matrix is totally positive (respectively totally nonnegative) if all its minors are positive (respectively nonnegative) real numbers. These matrices have a number of remarkable properties: for example, an $n \times n$ totally positive matrix has $n$ distinct positive eigenvalues. The space of these matrices can be grouped up into topological cells, with each cell completely parametrized by a certain planar network \cite{fomin}. The idea of total positivity found numerous applications and was studied from many different angles, including oscillations in mechanical systems, stochastic processes and approximation theory, and planar resistor networks \cite{fomin}.

Now, instead of considering $n \times n$ matrices with nonnegative minors, consider a full-rank $k \times n$ matrix with all maximal minors nonnegative. This arose from the study of the totally nonnegative part of the Grassmannian by Postnikov \cite{Postnikov}. The set of nonzero maximal minors of such matrices forms a positroid, which is a matroid used to encode the topological cells inside the nonnegative part of the Grassmannian. Positroids have a number of nice combinatorial properties. In particular, Postnikov demonstrated that positroids are in bijection with certain interesting classes of combinatorial objects, such as Grassmann necklaces and decorated permutations. Recently, positroids have seen increased applications in physics, with use in the study of scattering amplitudes \cite{arkani} and the study of shallow water waves \cite{kodama}.

The set of bases of a positroid can be described nicely from the Grassmann necklace \cite{Oh}, and the polytope coming from the bases can be described using the cyclic intervals \cite{LP},\cite{ARW}. Non-crossing partitions were used to construct positroids from its connected components in \cite{ARW}. They were also used in \cite{EP} as an analogue of the bases for electroids. In this paper, we provide yet another usage of cyclic intervals and non-crossing partition for positroids.

Given an arbitrary set, the rank (the size of the biggest intersection with a basis) can be obtained by going through all the bases. In this paper, we show a method of obtaining the rank of an arbitrary set directly from the associated decorated permutation without having to go through the bases. In particular, we get a collection of upper bounds of the rank coming from non-crossing partitions, and one of them will be shown to be tight.

The structure of the paper is as follows. In section $2$, we go over the background materials needed for this paper, including the basics of matroids, positroids, Grassmann necklaces and decorated permutations. In section $3$ we show a basis exchange like property for cyclic intervals that works for positroids. In section $4$, we show our main result: that the rank of an arbitrary set in a positroid can be obtained directly from the decorated permutation by using non-crossing partitions. In section $5$, we provide an example of how to use our main result to compute the rank of a set.

\section*{Acknowledgement}
The authors would also like to thank Lillian Bu, Wini Taylor-Williams and David Xiang for useful discussions.

\section{Background materials}

\subsection{Matroids}

In this section we review the basics of matroids that we will need. We refer the reader to \cite{Oxley} for a more in-depth introduction to matroid theory.

\begin{definition}
A \newword{matroid} is a pair $(E,\B)$ consisting of a finite set $E$, called the \newword{ground set} of the matroid, and a nonempty collection of subsets $\B = \B(\M)$ of $E$, called the \newword{bases} of $\M$, which satisfy the \newword{basis exchange axiom}:

If $B_1,B_2 \in \B$ and $b_1 \in B_1 \setminus B_2$, then there exists $b_2 \in B_2 \setminus B_1$ such that $B_1 \setminus \{b_1\} \union \{b_2\} \in \B$. 
\end{definition}

A subset $F \subseteq E$ is called \newword{independent} if it is contained in some basis. All maximal independent sets contained in a given set $A \subseteq E$ have the same size, called the \newword{rank} $\rk{A}$ of $A$. The rank of the matroid $\M$, denoted as $\rk{\M}$, is given by $\rk{E}$. An element $e \in E$ is a \newword{loop} if it is not contained in any basis. An element $e \in E$ is a \newword{coloop} if it is contained in all bases. A matroid $\M$ is \newword{loopless} if it does not contain any loops. The \newword{dual} of $\M$ is a matroid $\M^{*} = (E,\B')$ where $\B' = \{E \setminus B | B \in \B(\M)\}$. By using the basis exchange axiom on the dual matroid, we get the following \newword{dual basis exchange axiom}:

If $B_1,B_2 \in \B$ and $b_2 \in B_2 \setminus B_1$, then there exists $b_1 \in B_1 \setminus B_2$ such that $B_1 \setminus \{b_1\} \union \{b_2\} \in \B$. 

\begin{remark}
In this paper, we will always use $[n] := \{1,\ldots,n\}$ as our ground set, reserving the usage of $E$ for subsets of the ground set we analyze. A matroid of rank $d$ will have bases in the set ${[n] \choose d}$ which stands for all cardinality $d$-subsets of $[n]$.
\end{remark}

Let $E$ be an arbitrary subset of the ground set $[n]$. For a basis $J$, if $|J \intersect E|$ is maximal among $|B \intersect E|$ for all bases $B$ of the matroid $\M$, we say that $J$ \newword{maximizes} $E$, or $J$ is \newword{maximal} in $E$. Similarly, if $|J \intersect E|$ is minimal among $|B \intersect E|$ for all bases $B$ of $\M$, we say that $J$ \newword{minimizes} $E$, or $J$ is \newword{minimal} in $E$.

The following property of the rank function will be crucial:

\begin{theorem} \cite{Oxley}
The rank function is \textit{semimodular}, meaning that $\rk{A\cup B}+\rk{A\cap B}\leq \rk{A}+\rk{B}$ for any subset $A$ and $B$ of the ground set.
\end{theorem}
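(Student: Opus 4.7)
The plan is to prove semimodularity by constructing a single independent set that certifies both sides of the inequality, exploiting the equality $|X| + |Y| = |X \cup Y| + |X \cap Y|$ for finite sets. The main tool is the standard matroid fact that any independent set can be extended to a maximal independent set inside any larger ambient set; this follows from the basis exchange axiom and is essentially the definition of rank.

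First I would pick a maximal independent subset $I$ of $A \cap B$, so that $|I| = \rk{A \cap B}$. Next, extend $I$ to a maximal independent subset $J$ of $A \cup B$, so that $|J| = \rk{A \cup B}$ and $I \subseteq J$. The idea now is to split $J$ according to $A$ and $B$: let $J_A := J \cap A$ and $J_B := J \cap B$. Both are independent as subsets of the independent set $J$, so $|J_A| \leq \rk{A}$ and $|J_B| \leq \rk{B}$, giving the right side of the desired inequality when added.

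For the left side, I would observe two things. First, $J \subseteq A \cup B$ gives $J_A \cup J_B = J$, so $|J_A \cup J_B| = \rk{A \cup B}$. Second, $J_A \cap J_B = J \cap (A \cap B)$ is an independent subset of $A \cap B$, hence has size at most $|I| = \rk{A \cap B}$; but it also contains $I$, since $I \subseteq J$ and $I \subseteq A \cap B$. Thus $|J_A \cap J_B| = \rk{A \cap B}$. Combining with the inclusion-exclusion identity
\[
|J_A| + |J_B| = |J_A \cup J_B| + |J_A \cap J_B|
\]
then yields $\rk{A \cup B} + \rk{A \cap B} = |J_A| + |J_B| \leq \rk{A} + \rk{B}$, as required.

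No step looks like a genuine obstacle here; the one place where a reader might want reassurance is the claim that $I$ can actually be extended to a maximal independent subset of $A \cup B$ (rather than just of the whole ground set). This follows from the fact that in any matroid, if $I \subseteq S$ and $I$ is independent, then $I$ sits inside some maximum-size independent subset of $S$: take any independent $I' \subseteq S$ with $|I'|$ maximal and repeatedly apply the exchange axiom (or equivalently, work inside the restriction matroid $\M|_S$). Given that, the argument is essentially bookkeeping with $|X| + |Y| = |X \cup Y| + |X \cap Y|$.
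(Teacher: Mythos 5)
Your argument is correct and complete; it is the standard textbook proof of submodularity of the matroid rank function (take a maximal independent subset $I$ of $A \cap B$, augment it to a maximal independent subset $J$ of $A \cup B$, and compare sizes via $|J \cap A| + |J \cap B| = |J| + |J \cap A \cap B|$). The paper itself gives no proof here --- it cites Oxley --- so there is nothing to compare against, but this is precisely the argument one finds in that reference.
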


Consider a matrix with entries in $\RR$ that has $n$ columns and $r$ rows, with $r \leq n$. Column sets that forms a $r$-by-$r$ submatrix with nonzero determinant forms (the set of bases of) a matroid. Such matroids are called \newword{realizable} matroids. For example, consider the following matrix:
\[ A = \left( \begin{array}{cccc}
1 & 0 & -3 & -1 \\
0 & 1 & 4 & 0\\
\end{array} \right)\] 

The column sets $\{1,2\}, \{1,3\},\{2,3\},\{2,4\},\{3,4\}$ give two-by-two submatrices that has nonzero determinant. So the collection $\{\{1,2\}, \{1,3\},\{2,3\},\{2,4\},\{3,4\}\}$ is a realizable matroid.

\begin{proposition}
\label{prop:exch}
Let $\M$ be a realizable matroid over the ground set $[n]$, and let $B$ be a basis of $\M$. Pick an arbitrary subset $E$ of $[n]$ such that $B$ maximizes $E$ and some $J \subseteq E$ such that $|J|= \rk{J} = \rk{E}$. Then $B \setminus (B \intersect E) \union J$ is another basis of $\M$.
\end{proposition}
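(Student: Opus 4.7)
The plan is to prove that $(B \setminus E) \union J$ has both cardinality and rank equal to $\rk{\M}$, which forces it to be a basis of $\M$. The only tool I expect to need is semimodularity of the rank function.

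First I would pin down the cardinality. Since $B \intersect E$ is independent in the restriction of $\M$ to $E$, its size is at most $\rk{E}$. Conversely, any basis of this restriction extends to a basis of $\M$, so some basis $B'$ satisfies $|B' \intersect E| = \rk{E}$; the maximization hypothesis on $B$ then forces $|B \intersect E| = \rk{E} = |J|$. Since $J \subseteq E$ while $B \setminus E$ is disjoint from $E$, the two pieces do not overlap, and hence $|(B \setminus E) \union J| = (\rk{\M} - \rk{E}) + \rk{E} = \rk{\M}$.

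For the rank bound, I would apply semimodularity to $A := (B \setminus E) \union J$ and $E$. Because $J \subseteq E$, a direct set-theoretic computation gives $A \union E = B \union E$ and $A \intersect E = J$, so semimodularity yields $\rk{B \union E} + \rk{J} \leq \rk{A} + \rk{E}$. Since $B \subseteq B \union E$ implies $\rk{B \union E} \geq \rk{B} = \rk{\M}$, and $\rk{J} = \rk{E}$ by hypothesis, rearranging gives $\rk{A} \geq \rk{\M}$. Combined with $|A| = \rk{\M}$, this forces $A$ to be a basis.

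The only real subtlety lies in the cardinality step, where one must carefully translate the maximization hypothesis into $|B \intersect E| = \rk{E}$; after that, the rank step is a one-line semimodularity computation. It is worth noting that the argument never invokes realizability of $\M$, so via the cited semimodularity theorem the conclusion actually holds for any matroid.
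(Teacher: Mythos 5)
Your proof is correct, and it takes a genuinely different route from the paper. The paper argues via realizability: identifying $J$, $E$, and $B \cap E$ with sets of column vectors, it observes that all three have the same linear span, so replacing the vectors indexed by $B \cap E$ with those indexed by $J$ preserves the property of being a vector-space basis. You instead argue purely matroid-theoretically, combining the cardinality count $|A| = \rk{\M}$ (for $A = (B \setminus E) \cup J$) with a semimodularity computation showing $\rk{A} \geq \rk{B \cup E} \geq \rk{\M}$; since rank never exceeds cardinality, $A$ is independent of full rank, hence a basis. The set-theoretic identities $A \cup E = B \cup E$ and $A \cap E = J$ (both relying on $J \subseteq E$) that feed into semimodularity check out, as does the argument that maximization forces $|B \cap E| = \rk{E}$. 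Your closing observation is the real payoff: the realizability hypothesis is superfluous, and the proposition holds for every matroid. This is a strictly more general result obtained with tools the paper already cites (Oxley's semimodularity theorem), at the cost of a slightly less transparent "just swap the spanning vectors" intuition.
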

\begin{proof}
From the condition $|J| = \rk{J} = \rk{E}$, the span of the vectors indexed by the set $J$ is exactly same as the span of the vectors indexed by the set $E$. Since $B$ maximizes $E$, the span of the vectors indexed by the set $B \intersect E$ is the same vector space. Hence starting from a set of basis vectors indexed by the set $B$, if we replace the set of vectors indexed by $B \intersect E$ with the set of vectors indexed by $J$, we still get a set of basis vectors.
\end{proof}

\subsection{Positroids}

In this section we go over the basics of positroids. Positroids were originally defined in \cite{Postnikov} as the column sets coming from nonzero maximal minors in a matrix such that all maximal minors are nonnegative. For example, the matrix we saw in the previous section has nonnegative maximal minors:
\[ A = \left( \begin{array}{cccc}
1 & 0 & -3 & -1 \\
0 & 1 & 4 & 0\\
\end{array} \right)\] 

The nonzero maximal minors come from column sets $\{1,2\}, \{1,3\},\{2,3\},\{2,4\},\{3,4\}$. This collection forms a positroid. However in this paper, we will use an equivalent definition using Grassmann necklace and Gale orderings.

\begin{definition}
Let $d \leq n$ be positive integers. A \newword{Grassmann necklace} of type $(d,n)$ is a sequence $(I_1,\ldots,I_n)$ of $d$-subsets $I_k \in {[n] \choose d}$ such that for any $i \in [n]$,
\begin{itemize}
\item if $i \in I_i$ then $I_{i+1} = I_i \setminus \{i\} \union \{j\}$ for some $j \in [n]$,
\item if $i \not \in I_i$ then $I_{i+1} = I_i$,
\end{itemize}
where $I_{n+1} = I_1$. 
\end{definition}

The \newword{cyclically shifted order} $<_i$ on the set $[n]$ is the total order
$$i <_i i+1 <_i \cdots <_i n <_i 1 <_i \cdots <_i i-1. $$

For any rank $d$ matroid $\M$ with ground set $[n]$, let $I_k$ be the lexicographically minimal basis of $\M$ with respect to $<_k$, and denote
$$\I(\M) := (I_1,\ldots,I_n),$$
which forms a Grassmann necklace \cite{Postnikov}.

The \newword{Gale order} on ${[n] \choose d}$ (with respect to $<_i$) is the partial order $<_i$ defined as follows: for any two $d$-subsets $S = \{s_1 <_i \cdots <_i s_d\}$ and $T = \{t_1 <_i \cdots <_i t_d\}$ of $[n]$, we have $S \leq_i T$ if and only if $s_j \leq_i t_j$ for all $j \in [d]$ \cite{Gale}.

\begin{theorem}[\cite{Postnikov},\cite{Oh}]
\label{thm:oh}
Let $\I = (I_1,\ldots,I_n)$ be a Grassmann necklace of type $(d,n)$. Then the collection
$$\B(\I) := \{B \in {[n] \choose d} | B \geq_j I_j, \text{ for all } j \in [n] \}$$
is the collection of bases of a rank $d$ positroid $\M(\I) := ([n],\B(\I))$. Moreover, for any positroid $\M$, we have $\M(\I(\M)) = \M$. 
\end{theorem}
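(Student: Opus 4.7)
The plan is to split the theorem into two assertions and reduce both to the same equality. First: for any Grassmann necklace $\I$, the collection $\B(\I)$ is the set of bases of some positroid. Second: for any positroid $\M$, one has $\M(\I(\M)) = \M$. Both follow from establishing $\B(\M) = \B(\I(\M))$ for every positroid $\M$, once we invoke Postnikov's theorem that every Grassmann necklace $\I$ of type $(d,n)$ arises as $\I(\M)$ for some positroid $\M$: then $\B(\I) = \B(\I(\M)) = \B(\M)$, identifying $\M(\I)$ as a positroid, and taking $\I = \I(\M)$ for an arbitrary positroid $\M$ gives $\M(\I(\M)) = \M$.

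Before proving the main equality, one verifies that $\I(\M)$ is a genuine Grassmann necklace. If $k \in I_k$, the greedy construction of the $<_{k+1}$-lex-minimum basis starts from $I_k \setminus \{k\}$ and appends the smallest valid completion; if $k \notin I_k$, the cyclic shift from $<_k$ to $<_{k+1}$ only moves $k$, so $I_k$ remains lex-minimum and $I_{k+1} = I_k$. For the inclusion $\B(\M) \subseteq \B(\I(\M))$, I would invoke the standard fact (due to Gale) that in any matroid the lex-minimum basis with respect to a fixed linear order is its unique Gale-minimum basis; applied cyclically to each $<_j$, this gives $B \geq_j I_j$ for every basis $B$ of $\M$.

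The substantive step is the reverse inclusion $\B(\I(\M)) \subseteq \B(\M)$. My strategy is a proof by contradiction: assume $B \in {[n] \choose d}$ satisfies $B \geq_j I_j$ for all $j$ but $B$ is not a basis. Consider a basis $B^\ast$ maximizing $|B^\ast \cap B|$ and pick $b \in B \setminus B^\ast$; the fundamental circuit of $b$ with respect to $B^\ast$ must lie entirely inside $B$, since otherwise swapping $b$ with an element outside $B$ would strictly increase $|B^\ast \cap B|$. From here one wants to select a cyclic index $j$ and a position $k$ so that the $k$-th element of $I_j$ (in the sorted order $<_j$) is forced to be strictly smaller, under $<_j$, than the $k$-th element of $B$, violating $B \geq_j I_j$. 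Producing this violation is the main obstacle: it requires a careful interleaving analysis of the elements of $B$, $B^\ast$, and $I_j$ in the cyclically shifted orders, using the constraint $I_j \leq_j B^\ast$ supplied by Gale's theorem together with the exchange structure of the fundamental circuit around $B^\ast$. This is the technical core of the argument and is where the work in Oh's original proof is concentrated; once the contradiction is established, both assertions of the theorem fall out immediately.
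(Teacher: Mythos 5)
The paper does not prove this theorem; it cites it from Postnikov and Oh, so there is no in-paper argument to compare against. Evaluating your proposal on its own merits: the reduction to the single equality $\B(\M) = \B(\I(\M))$ for positroids $\M$, modulo Postnikov's surjectivity of $\M \mapsto \I(\M)$, is a sound framing, the verification that $\I(\M)$ is a Grassmann necklace is fine, and the inclusion $\B(\M) \subseteq \B(\I(\M))$ via Gale's lex-min equals Gale-min theorem is correct.

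The reverse inclusion $\B(\I(\M)) \subseteq \B(\M)$ is where the content lives, and your sketch has a genuine gap that is more than a matter of unfinished bookkeeping. Every tool you propose to use for the contradiction --- a basis $B^\ast$ maximizing $|B^\ast \cap B|$, the fundamental circuit of $b$ with respect to $B^\ast$ lying inside $B$, and the Gale-order bound $I_j \leq_j B^\ast$ --- is a statement about arbitrary matroids. But the inclusion $\B(\I(\M)) \subseteq \B(\M)$ is false for general matroids: if it held, every matroid would coincide with the positroid determined by its Grassmann necklace, so non-positroid matroids would not exist. Hence the ``careful interleaving analysis'' you defer to must be exactly where the positroid hypothesis (realizability by a totally nonnegative matrix) enters the argument, and you never say how it does. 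As outlined, the strategy would prove too much if it closed. The known proofs in the literature do not take the form of a fundamental-circuit/Gale-order contradiction; Oh's argument constructs a totally nonnegative realization directly (and Postnikov's approach works through the $\Gamma$-diagram / plabic-graph parametrization), precisely because some explicit positivity input is needed. So this is a real gap: you have correctly isolated the hard step and correctly identified that it is the crux, but you have not carried it out, and the tools you list are insufficient in principle because they are insensitive to the positroid hypothesis.
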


In order to check if a set is a basis of a positroid or not, we do not have to check for all the cyclic orderings.

\begin{corollary}
\label{cor:selected}
Let $\M \subseteq {[n] \choose d}$ be a positroid and $\I$ the associated Grassmann necklace. A set $B \in {[n] \choose d}$ is a basis of $\M$ if and only if $B \geq_b I_b$ for all $b \in B$.
\end{corollary}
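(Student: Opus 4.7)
The forward direction is immediate from Theorem \ref{thm:oh}: if $B$ is a basis then $B \geq_j I_j$ for every $j \in [n]$, so in particular for each $j = b \in B$. For the converse, given $B$ satisfying $B \geq_b I_b$ for all $b \in B$, the plan is to establish $B \geq_j I_j$ for every $j \in [n]$ and then invoke Theorem \ref{thm:oh}. Fix $j$; if $j \in B$ we are done, so assume $j \notin B$ and let $b^*$ be the smallest element of $B$ in $<_j$. By hypothesis $B \geq_{b^*} I_{b^*}$, and the strategy is to descend from $b^*$ back to $j$ in the cyclic order, transferring the Gale inequality one step at a time.

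The engine is a one-step sublemma: if $i \notin B$ and $B \geq_{i+1} I_{i+1}$, then $B \geq_i I_i$. Its proof rests on two facts. First, $<_i$ and $<_{i+1}$ differ only in where $i$ sits (first in $<_i$, last in $<_{i+1}$), so Gale comparisons among elements of $[n] \setminus \{i\}$ are identical in the two orders. Second, the Grassmann necklace property gives either $I_{i+1} = I_i$ (when $i \notin I_i$) or $I_{i+1} = (I_i \setminus \{i\}) \cup \{k\}$ for some $k \neq i$. In the substantive sub-case, write $I_i$ in $<_i$ order as $i = y_1 <_i y_2 <_i \cdots <_i y_d$ and sort $I_{i+1}$ in $<_i$ order as $z_1 <_i \cdots <_i z_d$; since $\{y_2,\ldots,y_d\}$ is obtained from $\{z_1,\ldots,z_d\}$ by deleting the single element $k$, one has $y_p \leq_i z_p$ for every $p \geq 2$. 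Combined with $z_p \leq_{i+1} b_p$ (which transfers to $\leq_i$ because neither $z_p$ nor $b_p$ equals $i$), this yields $y_p \leq_i b_p$; slot $p = 1$ is trivial because $y_1 = i$ is the $<_i$-minimum.

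The one edge case is $i \in I_{i+1}$, corresponding to $i$ being a coloop. Then $i$ is the $<_{i+1}$-maximum of $I_{i+1}$, so $B \geq_{i+1} I_{i+1}$ would force the $<_{i+1}$-largest element of $B$ to equal the global $<_{i+1}$-maximum $i$, contradicting $i \notin B$; hence this case cannot arise under the hypothesis. Iterating the sublemma from $i = b^*-1$ down to $i = j$ in the $<_j$-cyclic order yields $B \geq_j I_j$, since every intermediate $i$ lies strictly between $j$ and $b^*$ in $<_j$ and is thus outside $B$. The main obstacle is the case analysis inside the sublemma, specifically tracking how the inserted element $k$ interleaves with $y_2,\ldots,y_d$ in the sorted $I_{i+1}$; everything else is routine bookkeeping.
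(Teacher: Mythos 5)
Your proof is correct, but it takes a genuinely different route from the paper's. The paper's argument is a single jump: for any $q$, letting $b_1$ be the $<_q$-minimum of $B$, the hypothesis $B \geq_{b_1} I_{b_1}$ transfers directly to $B \geq_q I_{b_1}$ (since no element of $B$ lies in $[q,b_1)$, the sorted sequence of $B$ is the same in both orders, while any elements of $I_{b_1}$ in $[q,b_1)$ only move to earlier slots), and then one concatenates with $I_{b_1} \geq_q I_q$, which holds because $I_q$ is the Gale-minimal basis with respect to $<_q$ and $I_{b_1}$ is itself a basis. You instead descend from $b^*$ to $j$ one position at a time, and your one-step sublemma is proved purely from the Grassmann necklace recurrence ($I_{i+1} = I_i$ or $I_{i+1} = (I_i\setminus\{i\})\cup\{k\}$) together with the observation that deleting one element from a sorted $d$-set and prepending the global minimum $i$ can only decrease each slot; your treatment of the coloop case $i \in I_{i+1}$ (where the sublemma's hypotheses are contradictory) is also right. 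The trade-off: the paper's proof is shorter but leans on the semantic fact that the necklace elements are the Gale-minimal bases, whereas yours is longer but entirely self-contained at the level of the necklace axioms and never needs to compare $I_{b^*}$ with $I_j$ as bases. Both are valid; yours costs an induction and a three-way case analysis where the paper spends one line.
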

\begin{proof}
For arbitrary $q \in [n]$, denote the elements of $B$ as $b_1 <_q b_2 <_q \cdots <_q b_d$. If we had $B \geq_{b_1} I_{b_1}$, we would also have $B \geq_q I_{b_1} \geq_q I_q$.
\end{proof}


\begin{definition}
A \newword{decorated permutation} of the set $[n]$ is a bijection $\pi$ of $[n]$ whose fixed points are colored either white or black. A \newword{weak $i$-exceedance} of a decorated permutation $\pi$ is an element $j \in [n]$ such that either $j <_i \pi^{-1}(j)$ or $j$ is a fixed point colored black. 
\end{definition}

Given a decorated permutation $\pi$ of $[n]$ we can construct a Grassmann necklace $\I = (I_1,\ldots,I_n)$ by letting $I_k$ be the set of weak $k$-exceedances of $\pi$. A graphical way to see this is to cut the circle off between $k-1$ and $k$ to get a horizontal straight line with leftmost endpoint being $k$ and rightmost endpoint being $k-1$. Redraw the arrows of the permutation accordingly so that it stays within the line. Endpoints of the leftward arrows are exactly the weak $k$-exceedances of $\pi$, hence the elements of $I_k$. There is a bijection between Grassmann necklaces and decorated permutations \cite{Postnikov}.

\begin{figure}
\begin{center}
\includegraphics[scale = 0.7]{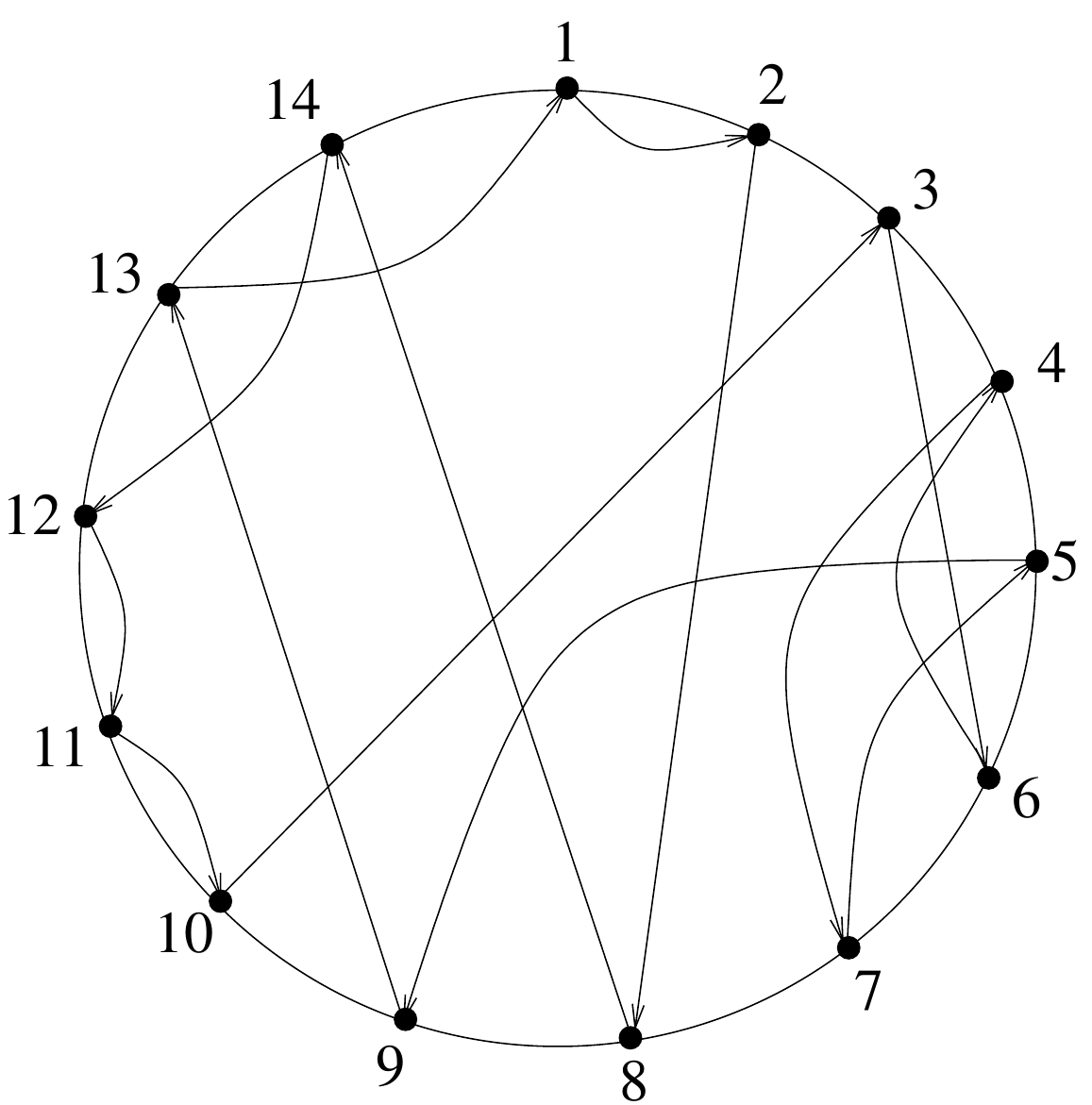}

    \caption{A decorated permutation.}
		\label{fig:permutation}

\end{center}
\end{figure}

For example, take a look at the decorated permutation (since it has no fixed points, it is the usual permutation) in Figure~\ref{fig:permutation}. It is the permutation $[2,8,6,7,9,4,5,14,13,3,10,11,1,12]$ under the usual bracket notation. The weak $1$-exceedances of the permutation is given by the set $\{1,3,4,5,10,11,12\}$, and this is $I_1$ of the associated Grassmann necklace.

\begin{remark}
When we are dealing with positroids, we will always envision the ground set $[n]$ to be drawn on a circle. We will say that $a_1,\ldots,a_t \in [n]$ are \newword{cyclically ordered} if there exists some $i \in [n]$ such that $a_1 <_i \cdots <_i a_t$.
\end{remark}

Given $a,b \in [n]$, we define the cyclic interval $[a,b]$ to be the set $\{x| x \leq_a b\}$. These cyclic intervals play an important role in the structure of a positroid \cite{Knutson},\cite{LP},\cite{ARW}. All intervals mentioned in this paper will actually be referring to cyclic intervals.

\begin{remark}
\label{rem:nofixed}
If a positroid $\M$ has loops or coloops, it is enough to study the positroid $\M'$ obtained by deleting the loops and the coloops to study the structural properties of $\M$. So throughout this paper, we will assume that our positroid has neither loops nor coloops. This means that the associated decorated permutation has no fixed points.
\end{remark}

\section{Interval exchange and Morphing}

In this section we develop a stronger basis exchange technique for positroids. Throughout the paper, unless otherwise stated, we will always be working with a positroid $\M$ on a ground set $[n]$, with rank $d$, having Grassmann necklace $\I = (I_1,\ldots,I_n)$, and an associated decorated permutation $\pi$ that does not have any fixed points (see Remark~\ref{rem:nofixed}). The example positroid that we will be using, again unless otherwise stated, will be the positroid associated to the decorated permutation of Figure~\ref{fig:permutation}.

The following property follows from the definition of Grassmann necklaces and the proof will be omitted.

\begin{lemma}[Sharing property]
\label{lem:sharing}
Let $a$ and $b$ be arbitrary elements of $[n]$. Then we have $I_a \intersect [b,a) \subseteq I_b \intersect [b,a)$.
\end{lemma}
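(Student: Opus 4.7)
My plan is to unfold both sides of the containment via the decorated-permutation description of the Grassmann necklace recalled in Section~2: $I_k$ is the set of weak $k$-exceedances of the permutation $\pi$ associated to $\M$. Since Remark~\ref{rem:nofixed} lets me assume $\pi$ has no fixed points, the condition $j \in I_k$ simplifies to the single inequality $j <_k \pi^{-1}(j)$. Thinking of $[n]$ as points on a circle, this says precisely that $\pi^{-1}(j)$ lies on the clockwise arc strictly between $j$ and $k$, which in the paper's cyclic-interval notation reads $\pi^{-1}(j) \in (j, k)$.

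With this translation in hand, the argument reduces to a one-step cyclic-geometry observation. I would pick an arbitrary $j \in I_a \cap [b, a)$. Because $j \in [b, a)$, the three points $b, j, a$ occur in clockwise order on the circle, so starting from $j$ and sweeping clockwise I reach $a$ strictly before I reach $b$. Hence the arc $(j, a)$ is contained in the longer arc $(j, b)$. From $j \in I_a$ I have $\pi^{-1}(j) \in (j, a)$, and the arc inclusion immediately upgrades this to $\pi^{-1}(j) \in (j, b)$, which is exactly $j <_b \pi^{-1}(j)$, i.e., $j \in I_b$. Combined with $j \in [b, a)$, this gives $j \in I_b \cap [b, a)$ as required.

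The only step requiring care is verifying the arc inclusion $(j, a) \subseteq (j, b)$ cleanly from the hypothesis $j \in [b, a)$, together with the degenerate case $a = b$ where $[b, a)$ collapses to the empty set and the containment is vacuous. A permutation-free alternative would be to induct on the cyclic distance from $b$ to $a$ using the Grassmann-necklace recursion $I_{k+1} = I_k \setminus \{k\} \cup \{\cdot\}$, handling the element newly inserted at each step; but that route still needs the lex-min characterization of $I_{k+1}$ to constrain where the inserted element can live, and in the end it essentially reproduces the same arc comparison. I therefore expect the short permutation argument above to be the cleanest route, with the arc-nesting observation being the conceptual heart of the proof.
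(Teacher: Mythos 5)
Your argument is correct: unfolding $j\in I_k$ (for a fixed-point-free $\pi$) as $\pi^{-1}(j)$ lying on the clockwise arc from $j$ to $k$, and observing that for $j\in[b,a)$ the arc $(j,a)$ is nested inside the arc $(j,b)$, does give $I_a\cap[b,a)\subseteq I_b\cap[b,a)$, and you correctly flag the only degenerate cases ($a=b$, and $j=b$ where the arc to $b$ wraps all the way around). The paper omits its proof entirely (stating the lemma "follows from the definition"), so there is nothing to compare against, but your verification via the weak-exceedance description of the necklace is exactly the kind of direct check the authors had in mind.
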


To illustrate using our running example, notice that since $I_3=\{3,4,5,8,10,11,12\}$, the set $I_3\intersect[9,3)=\{10,11,12\}$ is contained in $I_9 = \{9,10,11,12,14,4,5\}$.

We begin our analysis of the cyclic intervals of a positroid. The following lemma follows directly from Theorem~\ref{thm:oh}.

\begin{lemma}
\label{lem:intervalrk}
For any interval $[a,b] \subseteq [n]$, the interval is maximized by $I_a$. Any interval $(b,a)$ is minimized by $I_a$.
\end{lemma}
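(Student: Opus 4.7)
The plan is to reduce both assertions to the Gale-order characterization of positroid bases given in Theorem~\ref{thm:oh}, which states that $B$ is a basis if and only if $B \geq_j I_j$ for every $j$. In particular, every basis $B$ satisfies $B \geq_a I_a$. The key geometric observation is that the cyclic interval $[a,b]$ is a downward-closed set with respect to $<_a$: it consists of exactly the $<_a$-smallest elements up through $b$. So if one lists a basis in increasing $<_a$-order, its intersection with $[a,b]$ is an initial segment of that list, and its size equals the position of the last element still lying in $[a,b]$.

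Concretely, I would write $B = \{b_1 <_a \cdots <_a b_d\}$ and $I_a = \{i_1 <_a \cdots <_a i_d\}$. The Gale-domination $I_a \leq_a B$ gives $i_j \leq_a b_j$ for every $j \in [d]$. Let $k$ be the largest index with $b_k \leq_a b$ (set $k=0$ if none exists); then $|B \cap [a,b]| = k$. Applying $i_j \leq_a b_j$ to $j = 1,\ldots,k$ yields $i_j \leq_a b_j \leq_a b$, so $i_1,\ldots,i_k \in [a,b]$ and therefore $|I_a \cap [a,b]| \geq k = |B \cap [a,b]|$. This establishes that $I_a$ maximizes $[a,b]$.

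For the second assertion, I would simply take complements on the circle: since $[a,b]$ and $(b,a)$ partition $[n]$, every basis $B$ satisfies $|B \cap (b,a)| = d - |B \cap [a,b]|$, and the same identity holds for $I_a$. Flipping the inequality $|I_a \cap [a,b]| \geq |B \cap [a,b]|$ established above gives $|I_a \cap (b,a)| \leq |B \cap (b,a)|$, which is exactly the statement that $I_a$ minimizes $(b,a)$.

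There is no real obstacle here beyond bookkeeping: the work is entirely in recognizing that $[a,b]$ is a $<_a$-order ideal so that Gale dominance translates directly into a comparison of intersection sizes. The only subtlety to watch is that the two claimed intervals are stated with different endpoints ($I_a$ on both sides), which is precisely what makes the complementation argument go through; no separate appeal to $I_b$ or to the sharing property (Lemma~\ref{lem:sharing}) is needed.
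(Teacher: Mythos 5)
Your proof is correct and follows exactly the route the paper intends: the paper omits the argument, stating only that the lemma ``follows directly from Theorem~\ref{thm:oh}'', and your write-up is precisely that derivation — Gale dominance $I_a \leq_a B$ plus the observation that $[a,b]$ is an initial segment of $<_a$, with the second claim obtained by complementation.
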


This can easily be seen by taking some Grassmann necklace element and any arbitrary basis; say, $I_6=\{6,7,8,9,10,11,12\}$ and $B=\{6,7,10,11,12,1,4\}$.
Examine how the lemma holds on the intervals $[6,10]$ and $(10,6)$ in $[14]$:
$I_6\intersect[6,10]=\{6,7,8,9,10\}$ contains more elements than $B\intersect[6,10]=\{6,7,10\}$, while
$I_6\intersect(10,6)=\{11,12\}$ contains fewer elements than $B\intersect(10,6)=\{11,12,1,4\}$.
The above lemma also suggests that given a cyclic interval $[a,b]$, the set $I_a \intersect [a,b]$ plays a crucial role in studying that interval. The following claim follows directly from Proposition~\ref{prop:exch} and Lemma~\ref{lem:intervalrk}.

\begin{corollary}[Interval exchange property of positroids]
\label{prop:IEX}
If $J \in \M$ maximizes $[a,b] \subseteq [n]$, then $J \setminus (J \intersect [a,b]) \union (I_a \intersect [a,b]) \in \M$. Similarly, if $J$ minimizes $(b,a) \subseteq [n]$, then $J \setminus (J \intersect (b,a)) \union (I_a \intersect (b,a)) \in \M$.
\end{corollary}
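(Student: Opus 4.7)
The plan is to derive both parts of the corollary as straightforward applications of Proposition~\ref{prop:exch} (which applies since positroids are realizable), together with Lemma~\ref{lem:intervalrk}. The one subtle point is that Proposition~\ref{prop:exch} is stated only for maximizing subsets; for the ``minimizes'' half of the corollary I will have to translate minimization of $(b,a)$ into maximization of $[a,b]$ and then apply the proposition with a different choice of base basis.

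For the first statement, I would take $B=J$ and $E=[a,b]$ in Proposition~\ref{prop:exch}, with the candidate ``inner'' set $J'=I_a \intersect [a,b]$. The hypothesis that $J$ maximizes $E$ is given. Since $I_a$ is itself a basis, the subset $J'=I_a\intersect[a,b]$ is independent, so $\rk{J'}=|J'|$. By Lemma~\ref{lem:intervalrk}, $I_a$ maximizes $[a,b]$, so $|I_a \intersect [a,b]| = \rk{[a,b]}$, yielding $|J'|=\rk{J'}=\rk{E}$. Proposition~\ref{prop:exch} then produces the basis $J \setminus (J \intersect [a,b]) \union (I_a \intersect [a,b])$.

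For the second statement, I would first observe that $[a,b]$ and $(b,a)$ partition $[n]$, so $|J \intersect (b,a)| = d - |J \intersect [a,b]|$; thus $J$ minimizes $(b,a)$ if and only if $J$ maximizes $[a,b]$. Now I reverse the roles of $J$ and $I_a$: apply Proposition~\ref{prop:exch} with $B=I_a$, $E=[a,b]$, and inner set $J'=J \intersect [a,b]$. By Lemma~\ref{lem:intervalrk}, $I_a$ does maximize $[a,b]$; $J'$ is independent as a subset of the basis $J$; and $|J'|=\rk{[a,b]}$ because $J$ maximizes $[a,b]$. The proposition then gives that $I_a \setminus (I_a \intersect [a,b]) \union (J \intersect [a,b])$ is a basis, and rewriting this using the partition $[n]=[a,b] \sqcup (b,a)$ yields exactly $J \setminus (J \intersect (b,a)) \union (I_a \intersect (b,a))$.

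There is no serious obstacle here; the only thing that could trip one up is forgetting the role-swap trick and trying to apply Proposition~\ref{prop:exch} directly to $(b,a)$, which the proposition is not set up to handle. Both parts are one-line consequences once the right $B$, $E$, and $J'$ are lined up.
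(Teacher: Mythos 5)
Your proof is correct and follows exactly the route the paper intends: the paper gives no written proof, only the remark that the corollary ``follows directly from Proposition~\ref{prop:exch} and Lemma~\ref{lem:intervalrk},'' and your argument is a clean filling-in of that derivation. The role-swap (taking $B = I_a$ and inner set $J' = J \cap [a,b]$) together with the observation that minimizing $(b,a)$ is equivalent to maximizing its complement $[a,b]$ is precisely the intended reduction for the second half.
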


Here is an example of how the interval exchange property works. In the positroid coming from Figure~\ref{fig:permutation}, we have $I_{13} = \{13,14,3,4,5,10,11\}$. The set $B = \{1,4,7,8,10,11,13\}$ is a basis of the positroid. Now if we exchange $B \intersect [13,2] = \{13,1\}$ with $I_{13} \intersect [13,2] = \{13,14\}$, the resulting set $\{4,7,8,10,11,13,14\}$ is again a basis.

Our goal of the paper is to express the rank of an arbitrary set $E \subseteq [n]$ using non-crossing partitions. To do so, we need to construct the bases that maximize $E$ and analyze them. 

\begin{remark}
\label{rem:E}
When $E$ is a subset of the ground set $[n]$ and we are trying to write $E$ as a disjoint union of cyclic intervals so that $E = [a_1,b_1] \union \cdots \union [a_s,b_s]$, we will arrange the $a_i$'s such that $a_1 < a_2 < \cdots < a_s$ unless otherwise stated. The symbol $s$ will always be reserved for the number of disjoint intervals that $E$ has. Here the indices of $[s]$ are considered cyclically, so $a_{s+1}=a_1$.
\end{remark}

Our goal is to show that it is possible to find a basis that maximizes $E$ starting from some Grassmann necklace element and then applying a series of transformations to it.

\begin{lemma}
\label{lem:transf}
Let $E$ be an arbitrary subset of $[n]$. Write $E$ as in Remark~\ref{rem:E}. Let $i$ be any element of $[s]$. Then there exists a basis $B$ that maximizes $E$ and satisfies $B \intersect (b_{i-1},b_{i}] = I_{a_i} \intersect (b_{i-1},b_i]$. 
\end{lemma}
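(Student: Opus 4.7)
The strategy is to start from a carefully chosen basis and apply Corollary~\ref{prop:IEX} twice to morph it into the desired form. Observe that $(b_{i-1},b_i]$ decomposes as $(b_{i-1},a_i)\union[a_i,b_i]$, where the gap $(b_{i-1},a_i)$ is disjoint from $E$ and $[a_i,b_i]=E_i$ is one component of $E$. The cyclic ordering of Remark~\ref{rem:E} forces every component of $E$ into the cyclic interval $[a_i,b_{i-1}]$, giving the nested chain $[a_i,b_i]\subseteq E\subseteq[a_i,b_{i-1}]\subseteq[n]$. In particular, $[a_i,b_{i-1}]$ is the complement in $[n]$ of the gap $(b_{i-1},a_i)$, so maximizing the former is equivalent to minimizing the latter.

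First I would produce a basis $B_0$ of $\M$ that maximizes $[a_i,b_i]$, $E$, and $[a_i,b_{i-1}]$ all at once. Starting from a maximal independent subset of $[a_i,b_i]$, extend it successively to a maximal independent subset of $E$, then of $[a_i,b_{i-1}]$, and finally to a basis $B_0$ of $\M$; because each extension stays independent in the larger set, the resulting $B_0$ satisfies $|B_0\intersect A|=\rk{A}$ for every set $A$ in the chain, giving the three simultaneous maximization properties.

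Next I apply the first half of Corollary~\ref{prop:IEX} to $B_0$ on $[a_i,b_i]$, which is legal since $B_0$ maximizes this interval, producing the basis $B_1:=B_0\setminus(B_0\intersect[a_i,b_i])\union(I_{a_i}\intersect[a_i,b_i])$. Because the swap is confined to $[a_i,b_i]$ and preserves its intersection size, $B_1$ still maximizes $E$ and $[a_i,b_{i-1}]$, and $B_1\intersect[a_i,b_i]=I_{a_i}\intersect[a_i,b_i]$. I then apply the second half of Corollary~\ref{prop:IEX} to $B_1$ on $(b_{i-1},a_i)$, valid because $B_1$ minimizes this gap, producing $B:=B_1\setminus(B_1\intersect(b_{i-1},a_i))\union(I_{a_i}\intersect(b_{i-1},a_i))$. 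Since $(b_{i-1},a_i)$ is disjoint from both $E$ and $[a_i,b_i]$, this second swap preserves $B\intersect E$ and $B\intersect[a_i,b_i]$, so $B$ is a basis that still maximizes $E$ and satisfies $B\intersect(b_{i-1},b_i]=I_{a_i}\intersect(b_{i-1},b_i]$, as required.

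The main obstacle is arranging a single basis with three simultaneous maximization properties before any exchange is performed. A basis that maximizes $E$ does not automatically maximize a single component $E_i$, because the $\rk{E}$ basis elements in $E$ can distribute unevenly among the $E_j$'s; hence one cannot start with an arbitrary max-$E$ basis and directly invoke interval exchange on $[a_i,b_i]$. The nested independent-set extension along $[a_i,b_i]\subseteq E\subseteq[a_i,b_{i-1}]$ is precisely what makes both successive applications of Corollary~\ref{prop:IEX} legal.
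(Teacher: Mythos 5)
Your proof is correct, but it takes a genuinely different route from the paper's. The paper starts from an arbitrary basis $B$ maximizing $E$ and performs one-element-at-a-time exchanges: first using the basis exchange axiom to evict each $e \in B \cap (b_{i-1},a_i)\setminus I_{a_i}$ in favor of some $e' \in I_{a_i}\setminus B$ (each such swap keeps $B$ maximizing $E$ because $e$ lies outside $E$), and then using the dual basis exchange axiom to pull in each $e' \in I_{a_i}\cap[a_i,b_i]\setminus B$ at the cost of some $e \in B\setminus I_{a_i}$ (again preserving maximization of $E$, since the evicted $e$ is forced to lie in $E$). Your approach instead front-loads the work into constructing a single starting basis $B_0$ that simultaneously maximizes $[a_i,b_i]$, $E$, and $[a_i,b_{i-1}]$ via nested independent-set extension along the chain $[a_i,b_i]\subseteq E\subseteq [a_i,b_{i-1}]$, and then applies the interval exchange property (Corollary~\ref{prop:IEX}) twice, once on $[a_i,b_i]$ and once on the gap $(b_{i-1},a_i)$. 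Your argument is cleaner in that each exchange is a single block swap rather than a loop, but it leans on Proposition~\ref{prop:exch} (hence on realizability of the positroid) through Corollary~\ref{prop:IEX}, whereas the paper's argument uses only the abstract (dual) basis exchange axioms. Both approaches are valid; the chain-extension step you use is a standard matroid fact and you correctly verify that each block swap preserves the needed maximization and minimization properties because each swap is confined to a region disjoint from what the other properties track.
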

\begin{proof}
Let $B$ be a basis which maximizes $E$.
Pick any $e \in B\intersect(b_{i-1},a_i)\setminus I_{a_i}$. By the basis exchange axiom, there is an $e' \in I_{a_i}\setminus B$ such that $(B\setminus\{e\})\union\{e'\}$ is a basis; furthermore, this maximizes $E$. Set this as new $B$, and repeat the process until we run of elements in $B\intersect(b_{i-1},a_i)\setminus I_{a_i}$.

Now, let $e' \in I_{a_i}\intersect[a_i,b_i]\setminus B$. By the dual basis exchange axiom, there is an $e \in B\setminus I_{a_i}$ such that $(B\setminus\{e\})\union\{e'\}$ is a basis; furthermore, this maximizes $E$. Set this as new $B$, and repeat the process until we run of elements in $I_{a_i}\intersect[a_i,b_i]\setminus B$. 


\end{proof}

In particular, $B$ as above will minimize $(b_{i-1},a_i)$ and maximize $[a_i,b_i]$. To illustrate the above lemma with our running example, let $E=[1,4]\union[6,7]$. The set $B^0=\{1,3,6,7,10,11,14\}$ happens to be a basis which maximizes $E$.
Recall that $I_1=\{1,3,4,5,10,11,12\}$.
By exchanging to get $B^1:=(B^0\setminus\{14\})\union\{12\}$, we have another basis which maximizes $E$ and satisfies the condition that $B^1\intersect(7,1)=I_1\intersect(7,1)$. By exchanging again to get $B^2=(B^1\setminus\{6\})\union\{4\}$, we arrive at a final basis $B^2$ satisfying the condition that $B^2\intersect[1,4]=I_1\intersect[1,4]$ as well.

Now we develop a method of constructing a basis that maximizes $E$, starting from some element of the Grassmann necklace. Fix some cyclically ordered elements $b,c,d \in [n]$. Recall that the number of elements a basis can have in the interval $(b,c)$ is bounded below by $|I_c \intersect (b,c)|$ and the number of elements a basis can have in the interval $[c,d]$ is bounded above by $|I_c \intersect [c,d]|$. We will say that a set $J$ is \newword{compatible} with $I_c$ in $(b,d]$ if $J \intersect (b,c) \supseteq I_c \intersect (b,c)$ and $J \intersect [c,d] \subseteq I_c \intersect [c,d]$. When we are comparing such $J$ with $I_c$, we will call the elements of $(J \setminus I_c) \intersect (b,c)$ as the \newword{excessive} elements and the elements of $(I_c \setminus J) \intersect [c,d]$ as the \newword{gaps}. 


A set $J$ \newword{mimics} $I_c$ in $(b,d]$ if it is compatible with $I_c$ in $(b,d]$ and at least one of the above containments is an equality. For a set $J$ that mimics $I_c$ in $(b,d]$, if we have $J \intersect [c,d] = I_c \intersect [c,d]$, we say that $J$ is \newword{gap-free} (with respect to $I_c$ in $(b,d]$). Otherwise we say that $J$ \newword{has gaps} (with respect to $I_c$ in $(b,d]$).

\begin{remark}
For any cyclically ordered $a,b,c,d$, we have that $I_a$ is compatible with $I_c$ in $(b,d]$ from the sharing property, but it doesn't necessarily mimic $I_c$ in the same interval.
\end{remark}

Let $J$ be a basis of $\M$ that is compatible to $I_c$ in $(b,d]$, where $b,c,d$ are cyclically ordered elements of $[n]$. Our goal is to transform $J$ into a basis that mimics $I_c$ in $(b,d]$. The idea is to replace the elements of $(J \setminus I_c) \intersect (b,c)$ with $(I_c \setminus J) \intersect [c,d]$. Let $\alpha$ be $\minn{|(J \setminus I_c) \intersect (b,c)|,|(I_c \setminus J) \intersect [c,d]|}$. Define $J'$ to be the set obtained from $J$ by replacing biggest (with respect to $<_b$) $\alpha$ elements of $(J \setminus I_c) \intersect (b,c)$ with the smallest (again with respect to $<_b$) $\alpha$ elements of $(I_c \setminus J) \intersect [c,d]$. We will say that $J'$ is obtained from $J$ by \newword{mimicking $I_c$ in $(b,d]$}. We will describe the process as excessive elements of $(J \setminus I_c) \intersect (b,c)$ being moved to fill the gaps of $(I_c \setminus J) \intersect [c,d]$. The newly created $J'$ mimics $J$. We say that this mimicking process has gaps or is gap-free depending on whether $J'$ has gaps or is gap-free (with respect to $I_c$ in $(b,d]$).

Now we will use the above process multiple times starting from a Grassmann necklace element and produce multiple sets, that will potentially be a basis that maximizes $E$ (again using Remark~\ref{rem:E}). We dedicate $J^0$ to stand for $I_{a_1}$. Recursively, $J^{t}$ is going to be obtained from $J^{t-1}$ by mimicking $I_{a_{t+1}}$ in $(b_t,b_s]$ for $t \in \{1,\ldots,s-1\}$ (this is possible since $J^{t-1}$ is compatible with $I_{a_{t+1}}$ in $(b_t,b_s]$). We call this process the $t$-th \newword{morph} of $J^0 = I_{a_1}$. So we will say that the set $J^t$ is obtained from $J^0$ by \newword{morphing} $t$ times. Similarly, we will use $J_i^t$ to denote the set obtained from $I_{a_i}$ by imposing $a_1 <_i \cdots <_i a_s$ when labeling the starting points of the intervals of $E$, then morphing $t$ times.


For example, consider the set $E=[2,4]\union[7,10]$ in our example from Figure~\ref{fig:permutation}. The set $J_1^0$ is defined as $I_2 = \{2,3,4,5,10,11,12\}$. Since we are dealing with $J_1$, we label $a_1 = 2, b_1 = 4, a_2 = 7, b_2 = 10$. The first morph of $J_1$ will be mimicking $I_7$ in $(4,10]$. From $I_7 = \{7,8,9,10,11,12,4\}$, we move the excessive elements $(J_1^0 \setminus I_7) \intersect (4,7) = \{5\}$ to fill the gaps of $(I_7 \setminus J_1^0) \intersect [7,10] = \{7,8,9\}$. This gives us $J_1^1 = \{2,3,4,7,10,11,12\}$, which has gaps (with respect to $I_7$ in $[7,10]$). The reader should beaware that we do not know if $J_1^1$ is actually a basis of $\M$ yet. Similarly, the set $J_2^0$ is defined as $I_7 = \{7,8,9,10,11,12,4\}$. When we are dealing with $J_2$, we label $a_1 = 7, b_1 = 10, a_2 = 2, b_2=4$. The first morph of $J_2$ will be mimicking $I_2$ in $(10,4]$. Since there are no excessive elements in $(10,2)$, we have $J_2^1 = J_2^0$ in this case.


We have an analogue of the sharing property for $J_i^t$'s, which is straightforward from the sharing property:

\begin{lemma}[Sharing property for the morphs]
\label{lem:sharingmorph}
We have $J_i^t \cap [a_{i+1},a_i) \subseteq J_{i+1}^{t-1} \cap [a_{i+1},a_i)$.
\end{lemma}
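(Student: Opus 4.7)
The plan is to proceed by induction on $t$. Set $K := [a_{i+1}, a_i)$, $X := J_i^{t-1}$, $Y := J_{i+1}^{t-2}$, $c := a_{i+t}$, and $b := b_{i+t-1}$; then $J_i^t$ is the mimic of $I_c$ starting from $X$ in $(b, b_{i-1}]$, and $J_{i+1}^{t-1}$ is the mimic of $I_c$ starting from $Y$ in $(b, b_i]$. For the base case $t = 1$, we must show $J_i^1 \cap K \subseteq I_{a_{i+1}} \cap K$; partition $K = [a_{i+1}, b_{i-1}] \sqcup (b_{i-1}, a_i)$. On $[a_{i+1}, b_{i-1}]$, the compatibility condition enforced by the mimicking gives $J_i^1 \subseteq I_{a_{i+1}}$; on $(b_{i-1}, a_i)$, we have $J_i^1 = I_{a_i}$ (outside the mimicking range), and Lemma~\ref{lem:sharing} applied with $a = a_i$, $b = a_{i+1}$ yields $I_{a_i} \cap (b_{i-1}, a_i) \subseteq I_{a_{i+1}} \cap (b_{i-1}, a_i)$.

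For the inductive step, partition $K$ cyclically into Region $A := [a_{i+1}, b]$, Region $B := (b, b_{i-1}]$, and Region $C := (b_{i-1}, a_i)$. Region $A$ lies outside both mimicking intervals $(b, b_{i-1}]$ and $(b, b_i]$, so $J_i^t = X$ and $J_{i+1}^{t-1} = Y$ on $A$; the induction hypothesis $X \cap K \subseteq Y \cap K$ gives the inclusion. Region $C$ lies outside the mimicking range of $J_i$ (so $J_i^t = X$ on $C$), while it lies inside the $[c, b_i]$ part of the mimicking range of $J_{i+1}$, where the mimicking can only add elements; thus $J_{i+1}^{t-1} \supseteq Y$ on $C$, and combined with the induction hypothesis, $J_i^t \cap C = X \cap C \subseteq Y \cap C \subseteq J_{i+1}^{t-1} \cap C$. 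Region $B$ splits further into $B_1 := (b, c)$ and $B_2 := [c, b_{i-1}]$. Let $E_X := (X \setminus I_c) \cap B_1$, $G_X := (I_c \setminus X) \cap B_2$, and $\alpha_X := \min(|E_X|, |G_X|)$, with analogous $E_Y$, $G_Y$ (defined over the larger range $[c, b_i]$), and $\alpha_Y$ for $Y$. The induction hypothesis gives $E_X \subseteq E_Y$ and $G_Y \cap B_2 \subseteq G_X$. The mimicking removes the biggest $\alpha_X$ elements of $E_X$ and adds the smallest $\alpha_X$ elements of $G_X$, both with respect to $<_b$; likewise for $Y$.

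The main obstacle is the Region $B$ analysis, since both $J_i^t$ and $J_{i+1}^{t-1}$ are being actively modified. The key technical input I plan to use is the identity $|E_X| - |G_X| = |X \cap (b, b_{i-1}]| - |I_c \cap (b, b_{i-1}]|$ (and its analogue for $Y$). Combined with the induction hypothesis and Lemma~\ref{lem:sharing} applied to the adjacent Grassmann necklace elements $I_{a_{i+t-1}}$ and $I_c = I_{a_{i+t}}$ (via the compatibility of $X$ with $I_{a_{i+t-1}}$ inherited from the previous morph), this should show that whenever $J_i^t$ has leftover excess (respectively is gap-free), so does $J_{i+1}^{t-1}$. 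Once the parallel status of $J_i^t$ and $J_{i+1}^{t-1}$ is established, the containments $E_X \subseteq E_Y$ and $G_Y \cap B_2 \subseteq G_X$, together with a position-counting argument with respect to $<_b$, show that each element removed (respectively added) on the $J_i$ side is also removed (respectively added) on the $J_{i+1}$ side, yielding $J_i^t \cap B \subseteq J_{i+1}^{t-1} \cap B$ and completing the inductive step.
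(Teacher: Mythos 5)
Your setup is sound: the induction on $t$, the identification of the two mimicking intervals $(b,b_{i-1}]$ for $J_i$ and $(b,b_i]$ for $J_{i+1}$ (both targeting the same $I_c = I_{a_{i+t}}$), and the decomposition of $K = [a_{i+1},a_i)$ into Regions $A$, $B$, $C$. The base case is correct, and your treatment of Regions $A$ and $C$ (using the induction hypothesis, and the fact that on $C$ the $J_{i+1}$-mimic can only add elements since $C$ lies in the filling range $[c,b_i]$) is correct. But Region $B$, which you rightly flag as the main obstacle, is not actually resolved.

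Two concrete issues. First, the ``parallel status'' claim is stated as something your plan \emph{should} show, not proved, and the ingredients you name do not obviously deliver it. The key identity $|E_X|-|G_X|=|X\cap B|-|I_c\cap B|$ together with the induction hypothesis only gives $|Y\cap B| \geq |X\cap B|$, while for $Y$ one has $|E_Y|-|G_Y|=|Y\cap(b,b_i]|-|I_c\cap(b,b_i]|$, which differs from $|Y\cap B|-|I_c\cap B|$ by the term $|Y\cap(b_{i-1},b_i]|-|I_c\cap(b_{i-1},b_i]|\leq 0$. That term can be as negative as $-|I_c\cap(b_{i-1},b_i]|$, so nothing prevents the signs of $|E_X|-|G_X|$ and $|E_Y|-|G_Y|$ from differing. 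In the would-be-bad case ($J_i^t$ gap-free with strict leftover excess, $J_{i+1}^{t-1}$ with gaps), $J_{i+1}^{t-1}\cap B_1=I_c\cap B_1$ contains no element of $E_X$, whereas $J_i^t\cap B_1$ retains the smallest $|E_X|-|G_X|>0$ elements of $E_X$, so the inclusion on $B_1$ fails outright; ruling this case out is precisely what the missing parallel-status argument would have to accomplish. Second, even granting parallel status, your concluding position-counting claim is stated in the wrong direction: you say ``each element removed on the $J_i$ side is also removed on the $J_{i+1}$ side,'' i.e.\ $R_X\subseteq R_Y$. That is not what $J_i^t\cap B_1\subseteq J_{i+1}^{t-1}\cap B_1$ needs. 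Taking $e\in J_i^t\cap B_1$, you have $e\in X\setminus R_X\subseteq Y$ and need $e\notin R_Y$; $R_X\subseteq R_Y$ gives no such thing. What you actually need is $R_Y\cap E_X\subseteq R_X$, i.e.\ the removal on the $J_{i+1}$ side, restricted to $E_X$, is no larger than the removal on the $J_i$ side — and that requires a genuine comparison of $\alpha_X$ and $|R_Y\cap E_X|$ that your sketch does not carry out. So the crux of the inductive step is still open.
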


Our ultimate goal is to show that one of the $J_i^t$'s will maximize $E$.

\begin{lemma}
\label{lem:earlymax}
Fix a subset $E$ of the ground set as in Remark~\ref{rem:E}. Fix some $1 \leq t \leq s-1$, then consider the set $J^t$. For each $1 \leq p \leq s$, there exists some nonnegative number $q$ and a sequence $i_1 < \cdots < i_q < i_{q+1}=p$ such that $J^t$ maximizes $[a_1,b_{i_1}], [a_{i_1+1},b_{i_2}], \ldots, [a_{i_q+1},b_p]$.


\end{lemma}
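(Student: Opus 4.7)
The plan is to prove the lemma by induction on $t$, with trivial base case $t=0$: since $J^0 = I_{a_1}$ maximizes $[a_1, b_p]$ for every $p$ by Lemma \ref{lem:intervalrk}, we may take $q=0$. For the inductive step from $t-1$ to $t$, the key observation is that the $t$-th morph modifies $J^{t-1}$ only inside $(b_t, b_s]$. Consequently, whenever $p \leq t$ we have $J^t \cap [a_1, b_p] = J^{t-1} \cap [a_1, b_p]$, and the decomposition produced by the IH applied to $J^{t-1}$ at the same $p$ transfers verbatim to $J^t$.

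For $p \geq t+1$, I first invoke the IH on $J^{t-1}$ at $p' = t$ to decompose $[a_1, b_t]$ into chunks maximized by $J^{t-1}$ (hence by $J^t$, by the previous observation), and then attach one or more chunks covering $(b_t, b_p]$. If morph $t$ is gap-free, then $J^t \cap [a_{t+1}, b_s] = I_{a_{t+1}} \cap [a_{t+1}, b_s]$, so Lemma \ref{lem:intervalrk} shows that $J^t$ maximizes $[a_{t+1}, b_p]$, and appending this as a new final chunk completes the decomposition. If morph $t$ has gaps, then $J^t \cap (b_t, a_{t+1}) = I_{a_{t+1}} \cap (b_t, a_{t+1})$ but $J^t$ may fail to maximize $[a_{t+1}, b_p]$ on its own. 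The plan here is a dichotomy comparing the number $\alpha$ of elements moved by the morph with the number $g_p$ of gaps of $I_{a_{t+1}}$ lying in $[a_{t+1}, b_p]$: because the moved-in elements are exactly the smallest $\alpha$ gaps with respect to $<_{b_t}$, one of the following two alternatives must hold. Either $\alpha \geq g_p$, in which case every gap inside $[a_{t+1}, b_p]$ is filled and $J^t$ maximizes $[a_{t+1}, b_p]$ (append as a new chunk); or $\alpha \leq g_p$, in which case every moved-in element lies inside $[a_{t+1}, b_p]$ and the final chunk of the IH decomposition cleanly extends to $[a_{i_{q'}+1}, b_p]$, which $J^t$ then maximizes (absorb into the previous chunk).

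I expect the main obstacle to be the has-gaps subcase when $p > t+1$, where $(b_t, b_p]$ already contains several sub-intervals of $E$ and the counting interacts with earlier morphs rather than just with morph $t$. To handle it, I plan to combine the sharing property for morphs (Lemma \ref{lem:sharingmorph}) with Lemma \ref{lem:sharing} and careful element counting, reducing each subcase to a rank identity of the form $\rk{[a_{i_k+1}, b_{i_{k+1}}]} = |J^t \cap [a_{i_k+1}, b_{i_{k+1}}]|$ whose right-hand side is then evaluated via Lemma \ref{lem:intervalrk}.
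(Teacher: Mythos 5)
The proposal's overall strategy --- induction on $t$ --- genuinely differs from the paper's, which constructs the decomposition in one shot: for each $x$ the paper defines $\gamma(x)$ to be the largest $m$ with $J^m \cap [a_{m+1},b_x] = I_{a_{m+1}} \cap [a_{m+1},b_x]$, checks that the morphs after the $\gamma(x)$-th leave $|J \cap [a_{\gamma(x)+1},b_x]|$ unchanged, and obtains $i_1 < \cdots < i_{q+1}=p$ by iterating $\gamma$ backwards from $p$ until reaching $0$. Your base case, the transfer for $p \le t$, and the gap-free and $\alpha \ge g_p$ branches are all sound. The gap is in the $\alpha < g_p$ branch.

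There you invoke the inductive hypothesis at $p'=t$, obtain a decomposition of $[a_1,b_t]$ with final block $[a_{i_{q'}+1},b_t]$, and assert that this block ``cleanly extends'' to $[a_{i_{q'}+1},b_p]$, which $J^t$ maximizes. The trouble is that the cut index $i_{q'}$ is tuned to the right endpoint $b_t$, not to $b_p$, and these need not agree. Concretely take $s=3$, $t=2$, $p=3$. Suppose morph $1$ has gaps, and the number $\alpha_1$ of elements it moves strictly exceeds the number of gaps of $I_{a_2}$ inside $[a_2,b_2]$ while remaining smaller than the total number of gaps in $[a_2,b_3]$, so some but not all gaps inside $(b_2,b_3]$ are filled. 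Then $J^1$ maximizes $[a_1,b_1]$ and $[a_2,b_2]$ individually but not $[a_1,b_2]$ (one checks $\rk{[a_1,b_2]}-|J^1\cap[a_1,b_2]|=\alpha_1-g_2>0$), so the only decomposition the inductive hypothesis can produce at $p'=2$ is $[a_1,b_1],[a_2,b_2]$, forcing $i_{q'}=1$. Now suppose morph $2$ has gaps. All its replacements happen inside $[a_2,b_3]$, so $|J^2\cap[a_2,b_3]|=|J^1\cap[a_2,b_3]|<|I_{a_2}\cap[a_2,b_3]|=\rk{[a_2,b_3]}$, the strict inequality coming from the surviving gap in $(b_2,b_3]$. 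Hence $J^2$ does not maximize $[a_2,b_3]$ and the proposed absorption step breaks; the correct terminal block in this situation is $[a_1,b_3]$, exactly what $\gamma(3)=0$ produces in the paper's argument.

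The root cause is that invoking the inductive hypothesis at $p'=t$ throws away all information about how $J^{t-1}$ behaves on $(b_t,b_p]$, which is precisely what determines the terminal block. You could instead apply the hypothesis to $J^{t-1}$ at $p$ directly and then argue that morph $t$ preserves the cardinality of $J$ on each block; but for that you must know the hypothesis can hand you a decomposition whose last cut is at an index $\le t-1$, so that the entire morph stays within the final block. That is exactly the information the paper's $\gamma$ carries, so you would in effect have to fold $\gamma$ into a strengthened inductive statement --- at which point the paper's direct, non-inductive construction is the cleaner route.
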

\begin{proof}
Recall that the $t$-th morph removes the excessive elements in $(b_t,a_{t+1})$ and fills the gaps of $[a_{t+1},b_s]$ from left to right. Consider the intervals $[a_1,b_1], [a_2,b_2], \ldots, [a_p,b_p]$ which are some of the components of $E$. We will associate a number on each interval in the following way : for each $[a_x,b_x]$, let $\gamma(x)$ denote the biggest number within $\{0,\ldots,\maxx{t,x-1}\}$ such that all gaps of $[a_{\gamma(x)+1},b_x]$ gets filled in the $\gamma(x)$-th morph (that is, when $J^{\gamma(x)} \intersect [a_{\gamma(x)+1},b_x] = I_{a_{\gamma(x)+1}} \intersect [a_{\gamma(x)+1},b_x]$). Such number is guaranteed to exist, since $J^0 = I_{a_1}$. Now $J^t$ maximizes $[a_{\gamma(x)+1},b_x]$ in $\M$, since the morphs after the $\gamma(x)$-th morph does not change the number of elements in that interval. Starting from $p$, take $\gamma(p),\gamma(\gamma(p)),\ldots$ until you get $0$. Delete $0$ from this collection, and relabel them as $i_1 < i_2 < \cdots$ to get the desired result.
\end{proof}


From the above lemma, we are guaranteed that each $J^t$ maximizes some set in $[a_1,b_t]$ (setting $p$ as $t$) which is obtained from $E \intersect [a_1,b_t]$ by merging some nearby intervals and replacing them with a bigger interval (for example merging $[1,3] \union [6,9]$ to get $[1,9]$). Now if $J^t$ was gap free (that is the morph to get $J^t$ from $J^{t-1}$ is gap-free) then $[a_{t+1},b_s]$ is also maximized. In other words, $J^t$ that is gap free will maximize some set that is obtained from $E$ by merging some nearby intervals. 

The remainder of this section will be dedicated to showing that there is some $i$ and $t$ such that $J_i^t$ is gap free and is a basis of the positroid. The next section will use that result to obtain our main result. 


\begin{lemma}
\label{lem:push}
Let $b,c,d,e$ be cyclically ordered elements of the ground set $[n]$ and let $J$ be a basis of $\M$ that is compatible to $I_c$ in $(b,d]$. Define $J'$ to be obtained from $J$ by mimicking $I_c$ in $(b,d]$. The following holds:
\begin{itemize}
 \item If $J' \in \M$ and $J'$ has gaps (with respect to $I_c$ in $(b,d]$), then we have $|J \intersect (z,b]| \geq |I_c \intersect (z,b]|$ for all $z \in (d,b]$.

 \item If $|J \intersect (z,b]| \geq |I_c \intersect (z,b]|$ for all $z \in (d,b]$, then $J' \in \M$. Moreover if we have $J' \intersect [c,e) \subseteq I_c \intersect [c,e)$, then the above inequality holding for all $z \in [e,b]$ is enough to get $J' \in \M$.

\end{itemize}
\end{lemma}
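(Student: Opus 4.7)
For the first assertion, since $J' \in \M$, Theorem~\ref{thm:oh} gives $J' \geq_c I_c$, equivalently $|J' \cap [c, y]| \leq |I_c \cap [c, y]|$ for every $y$. Take $y = z$ for $z \in (d, b]$ and split $[c, z] = [c, d] \cup (d, z]$. Because $J'$ has gaps, $J' \cap (b, c) = I_c \cap (b, c)$ (all excessive have been removed, using compatibility) and $|J' \cap [c, d]| = |I_c \cap [c, d]| - k$ for some integer $k \geq 1$ (unfilled gaps remain), so $|J' \cap (b, d]| = |I_c \cap (b, d]| - k$. Substituting into the Gale inequality rearranges to $|J' \cap (b, z]| \leq |I_c \cap (b, z]|$. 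Taking complements in $[n]$ and using $J \cap (z, b] = J' \cap (z, b]$ (since $J$ and $J'$ agree outside $(b, d]$) gives the desired $|J \cap (z, b]| \geq |I_c \cap (z, b]|$.

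For the second assertion, I would verify $J' \geq_a I_a$ for every $a \in J'$ via Corollary~\ref{cor:selected}, splitting cases by the location of $a$. When $a \in (d, b]$ (outside the modification region), in the cyclic order $<_a$ every element of $X \subseteq (b, c)$ precedes every element of $Y \subseteq [c, d]$, and $|X| = |Y|$, so $|Y \cap [a, y]| \leq |X \cap [a, y]|$ for all $y$; the inequality $|J' \cap [a, y]| \leq |J \cap [a, y]| \leq |I_a \cap [a, y]|$ then follows directly from $J \in \M$. When $a \in (b, c) \cup [c, d]$ (inside the modification region), the surplus $|Y \cap [a, y]| - |X \cap [a, y]|$ can be positive for certain $y$, so I would combine three ingredients: (i) the compatibility $J \cap (b, c) \supseteq I_c \cap (b, c)$, which forces $|X \cap (y, a)| \leq |J \cap (y, a)| - |I_c \cap (y, a)|$ whenever $(y, a) \subseteq (b, c)$; (ii) the sharing property (Lemma~\ref{lem:sharing}) $I_a \cap [c, a) \subseteq I_c \cap [c, a)$, which replaces $I_a$-counts on subintervals of $[c, a)$ by (larger) $I_c$-counts; and (iii) the hypothesis $|J \cap (z, b]| \geq |I_c \cap (z, b]|$ applied at $z = y$ for $y \in (d, b]$. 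Chaining these yields $|J' \cap (y, a)| \geq |I_a \cap (y, a)|$ for each problematic $y$.

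For the \emph{moreover} part, the extra hypothesis $J' \cap [c, e) \subseteq I_c \cap [c, e)$ takes the place of the hypothesis on $z \in (d, e)$ in the case analysis above: for $y \in [c, e)$ and $a$ in the modified region, the extra hypothesis yields $J' \cap [a, y] \subseteq (J \cap [a, c)) \cup (I_c \cap [c, y])$, which sharing then bounds by $|I_a \cap [a, y]|$ without invoking the hypothesis at small $z$. Thus only the hypothesis on $z \in [e, b]$ is needed.

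The main obstacle will be the interval-by-interval case analysis in the second assertion: verifying that the three ingredients combine precisely to absorb the surplus for every $y$ when $a \in (b, c) \cup [c, d]$, particularly at the boundary values of $y$ near $d$ or $b$ where the hypothesis is applied at the endpoint of its range and must be paired with $J \geq_c I_c$ (from $J \in \M$) to close up a possible one-unit deficit.
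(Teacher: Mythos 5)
Your argument for the first bullet is correct and is essentially the paper's: both reduce $J'\geq_c I_c$ (from $J'\in\M$) to counting in the intervals $(b,c)$, $[c,d]$, $(d,z]$, observe that gaps force $J'\cap(b,c)=I_c\cap(b,c)$ and $|J'\cap[c,d]|=|I_c\cap[c,d]|-k$ with $k\geq 1$, and then pass to the complement to reach $|J\cap(z,b]|\geq|I_c\cap(z,b]|$.

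For the second bullet you diverge from the paper and the divergence creates a genuine gap. The paper's key step is a reduction: from the observation that every removed element lies in $(b,c)$ and every added element in $[c,d]$, one has $J'\setminus J >_a J\setminus J'$ for every $a\notin(b,d]$, hence $J'\geq_a J\geq_a I_a$ there; combined with Corollary~\ref{cor:selected}, this means the single Gale inequality $J'\geq_c I_c$ is the only one left to check, and that one becomes a one-line rearrangement of the hypothesis using $|J'\cap(z,c)|=|J\cap(z,c)|-\alpha$ for $z\in(d,b]$ (and noting $\alpha\leq |(J\setminus I_c)\cap(b,c)|$). You do make the observation about $a\in(d,b]$, but instead of using it to collapse the remaining work to the single check at $c$, you then attempt a direct verification of $J'\geq_a I_a$ for each $a\in(b,c)\cup[c,d]$, listing ingredients (compatibility, sharing, the hypothesis) without actually carrying out the bound, and you explicitly flag the resulting interval case analysis — especially the boundary values of $y$ — as the ``main obstacle.'' That is precisely the step your write-up does not close, so the proof is incomplete as written. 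The fix is not more casework but the missing reduction: once you know all $a\notin(b,d]$ are handled, you need only prove $|J'\cap(z,c)|\geq|I_c\cap(z,c)|$ for $z\in(d,b]$, and the ``moreover'' clause then follows because the stronger containment $J'\cap[c,e)\subseteq I_c\cap[c,e)$ disposes of $z\in(d,e)$ directly, leaving only $z\in[e,b]$ to be covered by the weakened hypothesis.

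A small additional remark: you assert that for the modified-region values of $a$ the sharing property lets you ``replace $I_a$-counts on subintervals of $[c,a)$ by (larger) $I_c$-counts.'' That direction of the inequality actually makes the target inequality $|J'\cap(y,a)|\geq|I_a\cap(y,a)|$ harder, not easier, unless you pair it with the compatibility lower bound $J'\cap(b,c)\supseteq I_c\cap(b,c)$ on the same subinterval; this is another indication that the combined estimate was not actually verified.
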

\begin{proof}

From the fact that $J' \setminus J >_z J \setminus J'$ for any $z \not \in (b,d]$ and using Corollary~\ref{cor:selected}, it is enough to show $J' \geq_c I_c$ in order to achieve $J' \in \M$. Therefore $J' \in \M$ is equivalent to $|J' \intersect [c,z]| \leq |I_c \intersect [c,z]|$ for all $z \in [n]$.  Since $J$ and $J'$ are compatible with $I_c$ in $(b,d]$ the inequality automatically holds for any $z \in (b,d]$. Hence we only need to show $|J' \intersect (z,c)| \geq |I_c \intersect (z,c)|$ for all $z \in (d,b]$. Observe that $|J' \intersect (z,c]| = |J \intersect (z,c]| - \alpha$ for $z \in (d,b]$, where $\alpha$ is the minimum of $|(I_c \setminus J) \intersect [c,d]|$ (process is gap-free) and $|(J \setminus I_c) \intersect (b,c)|$ (has gaps). Cleaning up the inequalities in the latter case gives us the desired results. 
\end{proof}

Using the above lemma, we will finish off the section with the following result.

\begin{proposition}
\label{prop:push}

Let $\M$ be a positroid over the ground set $[n]$ and $E = [a_1,b_1] \union \cdots [a_s,b_s]$ be a subset of the ground set. Again consider the sets of form $J_i^t$ for $1 \leq t \leq s-1$, obtained from $I_{a_i}$ by morphing $t$ times with respect to $E$. Fix some $1 \leq h \leq s-1$. If $J_i^t \in \M$ and have gaps for all $i \in [n]$ and $1 \leq t < h$, then $J_i^h \in \M$ for all $i \in [n]$.
\end{proposition}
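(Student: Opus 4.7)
The plan is to invoke the moreover clause of the second bullet of Lemma~\ref{lem:push} with the choice $J = J_i^{h-1}$, $J' = J_i^h$, $c = a_{i+h}$, $b = b_{i+h-1}$, $d = b_{i-1}$, and $e = a_i$. This reduces the task of showing $J_i^h \in \M$ to two verifications: (i) the inclusion $J_i^h \cap [a_{i+h}, a_i) \subseteq I_{a_{i+h}} \cap [a_{i+h}, a_i)$, and (ii) the inequality $|J_i^{h-1} \cap (z, b_{i+h-1}]| \geq |I_{a_{i+h}} \cap (z, b_{i+h-1}]|$ for every $z \in [a_i, b_{i+h-1}]$.

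For (i), I split $[a_{i+h}, a_i)$ as $[a_{i+h}, b_{i-1}] \cup (b_{i-1}, a_i)$. On $[a_{i+h}, b_{i-1}]$ the containment is the compatibility built into the morph that defines $J_i^h$. On $(b_{i-1}, a_i)$ every morph acts strictly inside some $(b_{i+t-1}, b_{i-1}]$, so $J_i^h$ agrees with $J_i^0 = I_{a_i}$ there; Lemma~\ref{lem:sharing} applied with $a = a_i$, $b = a_{i+h}$ then gives $I_{a_i} \cap (b_{i-1}, a_i) \subseteq I_{a_{i+h}} \cap (b_{i-1}, a_i)$, completing this step.

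For (ii), I would induct on $h$. The base case $h = 1$ is immediate: $J_i^0 = I_{a_i}$ and $(z, b_i] \subseteq [a_i, a_{i+1})$ when $z \in [a_i, b_i]$, so Lemma~\ref{lem:sharing} with $a = a_{i+1}$, $b = a_i$ delivers the bound directly. For $h \geq 2$ the assumption that $J_i^{h-1} \in \M$ has gaps lets me apply the first bullet of Lemma~\ref{lem:push} to the morph producing $J_i^{h-1}$, obtaining $|J_i^{h-2} \cap (z, b_{i+h-2}]| \geq |I_{a_{i+h-1}} \cap (z, b_{i+h-2}]|$ on $(b_{i-1}, b_{i+h-2}]$. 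Because morph $h-1$ acts only on $(b_{i+h-2}, b_{i-1}]$, this bound transfers verbatim to $J_i^{h-1}$. The residual contribution on the arc $(b_{i+h-2}, b_{i+h-1}]$ is analyzed structurally: having gaps forces $J_i^{h-1}$ to equal $I_{a_{i+h-1}}$ on $(b_{i+h-2}, a_{i+h-1})$ and to differ from $I_{a_{i+h-1}}$ on $[a_{i+h-1}, b_{i+h-1}]$ only by the rightmost unfilled gaps of the morph, after which two applications of Lemma~\ref{lem:sharing} translate everything from $I_{a_{i+h-1}}$ into statements about $I_{a_{i+h}}$.

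The main obstacle is showing that the sharing differential between $I_{a_{i+h-1}}$ and $I_{a_{i+h}}$ on $[a_{i+h-1}, b_{i+h-1}]$ dominates the count of unfilled gaps of morph $h-1$ living in that arc. When $z$ lies deep inside an earlier interval this delicate bookkeeping must recruit the hypothesis for starting points $j \neq i$, transported via the sharing property for morphs Lemma~\ref{lem:sharingmorph}; this is precisely why the proposition states the hypothesis uniformly for all $i \in [n]$ rather than only for the particular index under consideration.
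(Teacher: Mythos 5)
Your reduction via the moreover clause of Lemma~\ref{lem:push} and your verification of containment (i) follow the paper's overall frame, but the proof is incomplete at the very step you yourself flag as ``the main obstacle,'' and the gap is substantive rather than cosmetic. Applying the first bullet of Lemma~\ref{lem:push} to the morph $J_i^{h-2} \mapsto J_i^{h-1}$ yields an inequality between $J_i^{h-2}$ and $I_{a_{i+h-1}}$ on the restricted range $(b_{i-1}, b_{i+h-2}]$, whereas what is needed is an inequality between $J_i^{h-1}$ and the \emph{different} necklace element $I_{a_{i+h}}$ over the \emph{longer} range $[a_i, b_{i+h-1}]$. You observe that the morph fixes things outside $(b_{i+h-2}, b_{i-1}]$ and that the two necklace elements can be related by Lemma~\ref{lem:sharing}, but you never actually establish that ``the sharing differential dominates the count of unfilled gaps.'' You then say this bookkeeping ``must recruit the hypothesis for starting points $j \neq i$'' without carrying it out --- so the conclusion is not reached.

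The paper closes exactly this gap by a route you do not take. It invokes Lemma~\ref{lem:earlymax} to produce, from the gap-filling history, a chain of subintervals $[a_1,b_{i_1}], \ldots, [a_{i_q+1},b_h]$ that $J^{h-1}$ maximizes; it applies the first bullet of Lemma~\ref{lem:push} not to the morph producing $J_1^{h-1}$ but to the one producing $J_2^{h-1}$, which mimics the \emph{same} element $I_{a_{h+1}}$ in $(b_h,b_1]$ that the target morph does --- so the resulting inequality already involves the correct necklace element; and it then proves the key identity $J^{h-1}\cap (b_{i_1},b_h] = J_2^{h-2}\cap(b_{i_1},b_h]$ (using ``gaps'' at both indices and Lemma~\ref{lem:sharingmorph}) so the inequality transfers from $J_2^{h-2}$ to $J^{h-1}$. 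On the residual range $z\in[a_1,b_{i_1}]$, the paper splits $|J^{h-1}\cap(z,b_h]|$ across $b_{i_1}$ and applies the sharing property once. Your sketch omits Lemma~\ref{lem:earlymax} entirely, which is what isolates the range on which the two morph sequences coincide; without it, there is no principled way to split $z$ and the transfer argument cannot get off the ground. I would encourage you to restructure around $J_{i+1}^{h-2}$ and $I_{a_{i+h}}$ directly rather than trying to translate bounds across necklace elements.
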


\begin{proof}

First of all, $J^1 \in \M$ follows from Lemma~\ref{lem:push} and the sharing property. Hence we only need to consider the case when $h>1$. We will show that if $J^1,\ldots,J^{h-1},J_2^1,\ldots,J_2^{h-1}$ are bases of $\M$ and have gaps, then $J^h \in \M$. Also notice that $J^h \intersect [a_{i_{h+1}},a_1) \subseteq I_{a_{i_{h+1}}} \intersect [a_{i_{h+1}},a_1)$. Therefore in order to show $J^h \in \M$, we need $|J^{h-1} \intersect (z,b_h]| \geq |I_{a_{h+1}} \intersect (z,b_h]|$ for all $z \in [a_1,b_h]$ from Lemma~\ref{lem:push}.

From lemma~\ref{lem:earlymax}, there exists a sequence $i_1 < \cdots < i_q < i_{q+1}=h$ (setting $t=h-1$ and $p=h$ in the lemma) such that $J^{h-1}$ maximizes $E' = [a_1,b_{i_1}], \ldots , [a_{i_q+1},b_{h}]$ in $\M$. Since $I_{a_1}$ and $J^{h-1}$ have the same number of elements in $[a_1,b_{i_1}]$ and $I_{a_1} \leq J^{h-1}$, we have $|J^{h-1} \intersect (z,b_{i_1}]| \geq |I_{a_1} \intersect (z,b_{i_1}]|$ for all $z \in [a_1,b_{i_1}]$.


From $J^1,\ldots,J^{h-1}$ having gaps, $J^{h-1}$ minimizes each $(b_i,a_{i+1})$ within $(b_s,b_h]$. From $J_2^1, \ldots, J_2^{h-1}$ having gaps, $J_2^{h-2}$ minimizes each $(b_i,a_{i+1})$ within $(b_1,b_h]$. This implies that within intervals of form $(b_i,a_{i+1})$ contained in $(b_1,b_h]$, the sets $J^{h-1}$ and $J_2^{h-2}$ are exactly the same. From the sharing property of morphs, $J^{h-1}$ maximizing $[a_1,b_{i_1}] , \ldots , [a_{i_q+1},b_{h}]$ implies $J_2^{h-2}$ also does too except potentially at $[a_1,b_{i_1}]$. Combining these facts, we get that $J^{h-1} \intersect (b_{i_1},b_h] = J_2^{h-2} \intersect (b_{i_1},b_h]$.


We now have all the ingredients to show that $|J^{h-1} \intersect (z,b_h]| \geq |I_{a_{h+1}} \intersect (z,b_h]|$ for each $z \in [a_1,b_h]$. From $J_2^{h-1} \in \M$ and having gaps, Lemma~\ref{lem:push} tells us that the same inequality replacing $J^{h-1}$ with $J_2^{h-2}$ is true for $z \in (b_1,b_h]$. Therefore for any $z \in (b_{i_1},b_h]$, we have $|J^{h-1} \intersect (z,b_h]| = |J_2^{h-2} \intersect (z,b_h]| \geq |I_{a_{h+1}} \intersect (z,b_h]|$. For any $z \in [a_1,b_{i_1}]$, we have $|J^{h-1} \intersect (z,b_h]| = |J^{h-1} \intersect (z,b_{i_1}]| + |J^{h-1} \intersect (b_{i_1},b_h]| \geq |I_{a_1} \intersect (z,b_{i_1}]| + |J_2^{h-2} \intersect (b_{i_1},b_h]| \geq |I_{a_{h+1}} \intersect (z,b_h]|$, since we have $I_{a_1} \intersect (z,b_{i_1}) \supseteq I_{a_{h+1}} \intersect (z,b_{i_1}]$ from the sharing property.



\end{proof}

The above proposition will be used as a key idea during the proof of the main result in the next section.

\section{Rank of arbitrary sets}

Let $E$ be a subset of the ground set as in Remark~\ref{rem:E}. We use $E_i$ to denote $[a_i,b_i]$. The rank of $E$ is bounded above by $\rk{\M}$ minus the sum of the minimal number of elements that a basis of $\M$ can possibly have in each cyclic interval of the complement of $E$. So we get $\rk{E} \leq \rk{\M} - \sum_i \minelts{b_i,a_{i+1}}$, where $\minelts{b,a}$ stands for the minimal number of elements that a basis of $\M$ can have in the interval $(b,a)$.  We call this bound the \newword{natural rank bound of $E$}: $\nbd{E} := \rk{\M} - \sum_i (\minelts{b_i,a_{i+1}})$. Notice that $\minelts{b,a} = \rk{\M} - \rk{[a,b]}$.

\begin{definition}
Let $\Pi$ be a partition $T_1 \sqcup \cdots \sqcup T_p$ of $[s]$ into pairwise disjoint non-empty subsets. We say that $\Pi$ is a \newword{non-crossing partition} if there are no cyclically ordered $a,b,c,d$ such that $a,c \in T_i$ and $b,d \in T_j$ for some $i \not = j$. We will call the $T_i$'s as the \newword{blocks} of the partition.
\end{definition}

To illustrate with a simple example, $\{1,3\}\sqcup\{2\}\sqcup\{4\}$ is a non-crossing partition of $[4]$, but $\{1,3\}\sqcup\{2,4\}$ is not. This can be easily verified by drawing the points $1$ to $4$ on a circle and trying to cut the circle into distinct regions corresponding to the partitions; this can only be done in the case of non-crossing partitions.

Let $\Pi$ be an arbitrary non-crossing partition of $[s]$ with $T_1,\ldots,T_p$ as its parts. We define $E|_{T_i}$ as the subset of $E$ obtained by taking only the intervals indexed by elements of $T_i$. For example, $E|_{\{1,3\}}$ would stand for $E_1 \cup E_3$. By submodularity of the rank function, we get another upper bound on the rank of $E$ : $\rk{E} \leq \rk{E|_{T_1}} + \cdots + \rk{E|_{T_p}} \leq \nbd{E, \Pi} := \nbd{E|_{T_1}} + \cdots + \nbd{E|_{T_p}}$. So for each non-crossing partition of $[s]$, we get an upper bound on the rank of $E$. We show that one of those bounds has to be tight in the theorem below.

\begin{theorem}
\label{thm:rk}
Let $E = [a_1,b_1] \union \cdots \union [a_s,b_s]$ be a disjoint union of $s$ cyclic intervals, where $a_1,b_1,a_2,b_2,\ldots,a_s,b_s$ are cyclically ordered. We have $\rk{E} = \nbd{E,\Pi}$ for some non-crossing partition $\Pi$ of $[s]$.
\end{theorem}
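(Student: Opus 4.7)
The plan is to proceed by strong induction on $s$, the number of disjoint cyclic intervals comprising $E$. The base case $s = 1$ is immediate: $E = [a_1, b_1]$ is a single cyclic interval, maximized by $I_{a_1}$ by Lemma~\ref{lem:intervalrk}, so $\rk{E} = \rk{[a_1, b_1]} = \nbd{E} = \nbd{E, \{\{1\}\}}$.

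For the inductive step, my strategy is to use the morphing machinery of Section~3 to isolate a cyclic arc of indices $T \subseteq [s]$ on which the natural rank bound $\nbd{E|_T}$ is attained, then recurse on the complementary index set. The first task is to establish that there exists some $i \in [s]$ and some $t \in \{1, \ldots, s-1\}$ for which $J_i^t$ is a basis of $\M$ whose generating morph is gap-free. Iterating Proposition~\ref{prop:push} shows that if no such gap-free $J_i^t$ existed, every $J_i^t$ (for $1 \leq t \leq s-1$) would lie in $\M$ and have gaps; a cyclic counting argument based on $\sum_{i \in [s]} |J_i^{s-1} \cap E|$ combined with the sharing property then forces a contradiction with the cyclic symmetry of the Grassmann necklace.

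Given such a gap-free $J_i^t$ with $t$ minimal, every prior morph had gaps, so $J_i^t$ minimizes each gap interval $(b_{i+j-1}, a_{i+j})$ for $j = 1, \ldots, t$, and by Lemma~\ref{lem:earlymax} together with gap-freeness, $J_i^t$ maximizes the merged cyclic interval $[a_i, b_{i+t}]$. Setting $T := \{i, i+1, \ldots, i+t\}$ (cyclically), this yields
\[
\lvert J_i^t \cap E|_T \rvert \;=\; \rk{[a_i, b_{i+t}]} - \sum_{j=1}^{t} \minelts{b_{i+j-1}, a_{i+j}} \;=\; \nbd{E|_T},
\]
so $\rk{E|_T} = \nbd{E|_T}$. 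The set $E|_{T^c}$ is a disjoint union of $s - (t+1) < s$ cyclic intervals, so the induction hypothesis produces a non-crossing partition $\Pi'$ of $T^c$ with $\rk{E|_{T^c}} = \nbd{E|_{T^c}, \Pi'}$. Because $T$ is a cyclic arc of $[s]$, $\Pi := \{T\} \cup \Pi'$ is a non-crossing partition of $[s]$.

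It remains to verify $\rk{E} = \rk{E|_T} + \rk{E|_{T^c}}$, which together with the above will give $\rk{E} = \nbd{E|_T} + \nbd{E|_{T^c}, \Pi'} = \nbd{E, \Pi}$. Gap-freeness implies that $J_i^t$ agrees with $I_{a_{i+t}}$ on the range $[a_{i+t}, b_{i-1}]$, which is precisely where $E|_{T^c}$ lives and the natural starting point for morphing on the smaller problem. I plan to take an inductively produced optimizer $B'$ of $E|_{T^c}$ and apply Corollary~\ref{prop:IEX} together with Proposition~\ref{prop:exch} to graft the content of $B'$ on $[a_{i+t}, b_{i-1}]$ into $J_i^t$ without disturbing $J_i^t \cap [a_i, b_{i+t}]$, producing a single basis $B$ with $|B \cap E| = \nbd{E, \Pi}$. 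Combined with the a~priori upper bound $\rk{E} \leq \nbd{E, \Pi}$ recorded in the text, this closes the induction. The main obstacles are (a) the cyclic existence argument establishing that a gap-free morph must appear, which requires extracting a contradiction from the all-gap case, and (b) ensuring the grafting step composes validly through the recursion, which hinges on the alignment $J_i^t \cap [a_{i+t}, b_{i-1}] = I_{a_{i+t}} \cap [a_{i+t}, b_{i-1}]$ matching the initialization of the morphing process on $E|_{T^c}$.
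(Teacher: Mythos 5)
Your proposal follows the same high-level skeleton as the paper (morphing from a Grassmann necklace element until a gap-free step occurs, then recursing), but the inductive step diverges in ways that introduce genuine gaps.

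First, the ``all-gaps'' case is not a contradiction. You propose to show by a cyclic counting argument that some $J_i^t$ must be gap-free, but this is false: it is entirely possible that every $J_i^t$ has gaps. The paper handles this case directly rather than contradicting it --- when every morph of $J^0,\ldots,J^{s-1}$ has gaps, $J^{s-1}$ is a basis minimizing every complementary interval $(b_i,a_{i+1})$, so $|J^{s-1}\cap E| = \nbd{E} = \nbd{E,\{\{1,\ldots,s\}\}}$ and the theorem holds with the coarsest non-crossing partition. Trying to rule this case out cannot succeed.

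Second, the core claim that $J_i^t$ (for $t$ minimal gap-free) maximizes the single merged interval $[a_i,b_{i+t}]$ is not justified by Lemma~\ref{lem:earlymax}, and in general is false. Lemma~\ref{lem:earlymax} only gives that $J^t$ maximizes a chain of sub-pieces $[a_1,b_{i_1}], [a_{i_1+1},b_{i_2}], \ldots, [a_{i_q+1},b_t]$; when $q>0$, the merged interval $[a_1,b_t]$ need not itself be maximized. For instance, already at $t=1$ one can check that $|J^1\cap[a_1,b_2]| - \rk{[a_1,b_2]}$ equals $-(|I_{a_2}\cap(b_2,b_s]| - |I_{a_1}\cap(b_2,b_s]|)$, and by the sharing property this is $\le 0$ but can be strictly negative.

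Third, the claim that $J_i^t$ minimizes all $t$ internal gap intervals of $E|_T$ fails at $j=t$. Morphs $1,\ldots,t-1$ having gaps indeed forces $J^t$ to minimize $(b_j,a_{j+1})$ for $j<t$, but the $t$-th morph being \emph{gap-free} means it may \emph{leave behind} excessive elements in $(b_t,a_{t+1})$: the process stops when gaps run out, not when excess runs out. Thus $|J_i^t\cap(b_{i+t-1},a_{i+t})|$ need not equal $\minelts{b_{i+t-1},a_{i+t}}$, and the displayed equation computing $|J_i^t\cap E|_T|$ is not valid.

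Because of these two failures, $\rk{E|_T}=\nbd{E|_T}$ is not established, and $\Pi=\{T\}\cup\Pi'$ is not the correct partition in general. The paper avoids all of this by not trying to make $T$ a single block: it uses the full decomposition $F_1=[a_1,b_{i_1}],\ldots,F_{q+1}=[a_{i_q+1},b_t],F_{q+2}=[a_{t+1},b_s]$ supplied by Lemma~\ref{lem:earlymax} together with gap-freeness, takes for each $F_j$ a basis $K_j$ maximizing $F_j\cap E$ and (via Lemma~\ref{lem:transf}) minimizing the complement of $F_j$, grafts all of these into $J^t$ via Proposition~\ref{prop:exch}, and then assembles the non-crossing partition from the induction hypothesis applied to \emph{each} $F_j\cap E$. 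Your single $T$/$T^c$ split would need to be replaced by this finer piecewise decomposition (and you would then need to recurse on every piece, not only $T^c$) for the argument to go through.
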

\begin{proof}
We use induction on $s$, the number of disjoint cyclic intervals of $E$. In case $s=1$, we have $\rk{E} = \rk{\M} - \minelts{E^c} = \nbd{E} = \nbd{E, \{\{1\}\} }$. Assume for the sake of induction that the claim is true for $1,\ldots,s-1$ intervals. We define $J_i^t$ recursively as in the previous section. From Proposition~\ref{prop:push}, we either have some $J_i^t \in \M$ that is gap-free or we have $J^{s-1} \in \M$ that isn't gap-free. In the latter case, since $J^{s-1}$ minimizes every interval of form $(b_i,a_{i+1})$, we have $|J^{s-1} \intersect E| = \nbd{E} = \nbd{E,\{\{1,\ldots,s\}\}}$.

Therefore we only have to take care of the case when we have some $J_i^t \in \M$ that is gap-free. Without loss of generality, we will assume $i=1$. From Lemma~\ref{lem:earlymax}, we have some sequence $i_1 < \cdots < i_q < i_{q+1}=t$ such that $J_i^t$ maximizes $[a_1,b_{i_1}], \ldots, [a_{i_q+1},b_{t}],[a_{t+1},b_s]$ (the last interval is maximized due to $J_i^t$ being gap-free). We will use $F_1,\ldots,F_{q+2}$ to denote these intervals. For each $1 \leq j \leq q+2$, let $K_j$ be a basis that maximizes $F_j \intersect E$. Modify $K_j$ using Lemma~\ref{lem:transf} so that it minimizes the complement of $F_j$ in $[n]$. Since $|K_j \intersect F_j| = |J^t \intersect F_j|$, using Proposition~\ref{prop:exch} we can replace $J^t \intersect F_j$ with $K_j \intersect F_j$ in $J^t$ for each $j$ to obtain a new basis $B$. By induction hypothesis, for each $j$, we have $|B \intersect F_j \intersect E| = \rk{F_j \intersect E} = \nbd{F_j \intersect E, \Pi_j}$ for some non-crossing partition $\Pi_j$. Letting $\Pi$ be a non-crossing partition obtained by collecting all blocks of $\Pi_j$'s, we get $|B \intersect E| = \rk{E} = \nbd{E,\Pi}$.

\end{proof}

\begin{figure}

\begin{center}
\includegraphics[scale = 0.7]{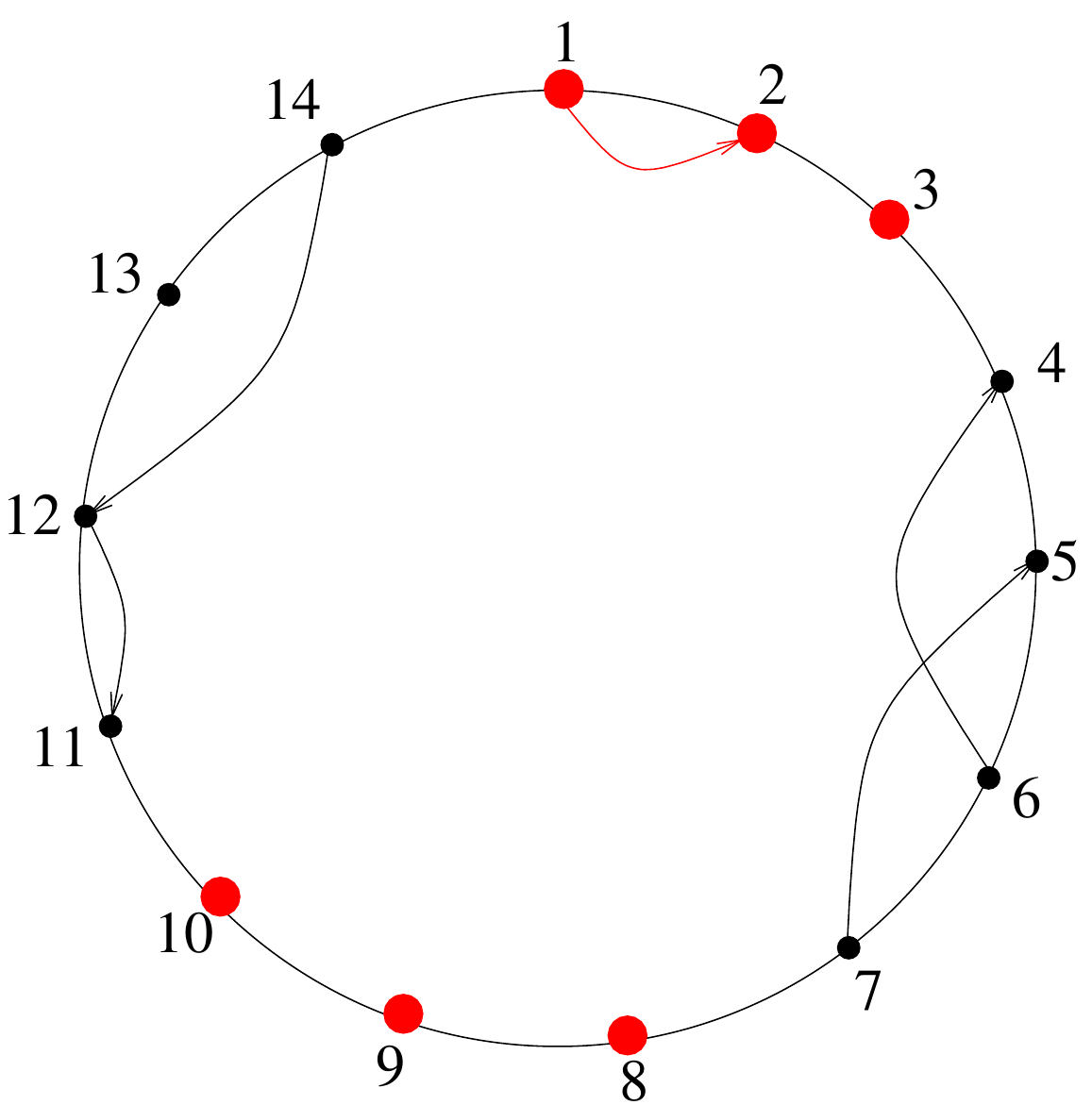}

    \caption{Information needed to compute the rank of $[1,3] \union [8,10]$.}
		\label{fig:interval13810rk}

\end{center}
\end{figure}

For example, take a look at Figure~\ref{fig:interval13810rk} (the positroid is the one associated to Figure~\ref{fig:permutation}). The rank of $E = [1,3] \union [8,10]$ is bounded above by $\nbd{E, \{\{1\},\{2\}\}}$ and $\nbd{E,\{\{1,2\}\}}$. We get $\nbd{E,\{\{1\},\{2\}\}} = \rk{[1,3]} + \rk{[8,10]} = 2 + 3 = 5$, since rank of an interval $[a,b]$ is given by $|[a,b]|$ minus the number of intervals of form $[\pi^{-1}(x),x]$ contained in $[a,b]$ (from $I_a$ being given by $a$-exceedances, and $\rk{[a,b]} = |I_a \intersect [a,b]|)$. We also have $\nbd{E,\{\{1,2\}\}} = \rk{\M} - \minelts{(3,8)} - \minelts{(10,1)} = 7 - 2 - 2 = 3$, since $\minelts{(b,a)}$ is given by the number of intervals of form $[x,\pi^{-1}(x)]$ contained in $(b,a)$. Hence the above theorem tells us that $\rk{E} = 3$.

\section{Application}

Let $\M$ be a positroid and let $E$ be an arbitrary subset of the ground set $[n]$. In this section, we will show how to use Theorem~\ref{thm:rk} to obtain the rank of $E$. We will call an interval of form $[x,\pi(x)]$ a \newword{CW-arrow}, and an interval of form $[x,\pi^{-1}(x)]$ a \newword{CCW-arrow} (each standing for clockwise and counterclockwise). Given a cyclic interval $T$, we use $\cw{T}$ to denote the number of CW-arrows contained in $T$. Similarly, we will use $\ccw{T}$ for the number of CCW-arrows contained in $T$. These numbers can easily be read from the associated decorated permutation of $\M$.

Recall that $\nbd{[a_1,b_1] \cup \cdots \cup [a_s,b_s]} = \rk{\M} - \sum_i (\minelts{b_i,a_{i+1}})$. And $\minelts{b_i,a_{i+1}}$ stands for the minimal possible number of elements a basis can have in the interval $(b_i,a_{i+1})$, which equals the number $|I_{a+1} \intersect (b_i,a_{i+1})|$. Hence $\minelts{b_i,a_{i+1}} = \ccw{(b_i,a_{i+1})}$. This gives us another way to interpret $\nbd{E}$ : it is $\rk{\M}$ minus the total number of CCW-arrows contained in the complement of $E$. In the special case when $E$ is a cyclic interval, $\nbd{E}$ is given by $|E|$ minus the number of CW-arrows contained in $E$.

Therefore for any $E$, we can obtain $\nbd{E,\Pi}$ by counting CW-arrows and CCW-arrows. If $E$ is the disjoint union of $s$ cyclic intervals, we first write all possible non-crossing partitions of $[s]$. Each one of them gives a sum of $\nbd{E'}$'s where $E'$ obtained from $E$ by taking some of the $s$ cyclic intervals of $E$, and we compute them by counting the CCW-arrows (or CW-arrows for intervals) of the decorated permutation.

Consider the positroid associated with Figure~\ref{fig:permutation}. Let us try to compute the rank for $E = [1,2] \union [7,10] \union [13,13]$. We have $3$ disjoint intervals, so the upper bounds of $\rk{E}$ will be coming from the non-crossing partitions of $\{1,2,3\}$. The following are the upper bounds for $\rk{E}$ we get:
\begin{itemize}
\item $\nbd{E,\{\{1,2,3\}\}} = \nbd{E} =  \rk{\M} - \ccw{(2,7)} - \ccw{(10,13)} - \ccw{(13,13)} = 7 - 1 - 1 - 0 = 5.$
\item $\nbd{E,\{\{1\},\{2,3\}\}} = \nbd{E_1} + \nbd{E_2 \union E_3} = |[1,2]| - \cw{[1,2]} + \rk{\M} - \ccw{(10,13)} - \ccw{(13,7)} = 1 + 7 - 1 - 1 = 6.$
\item $\nbd{E,\{\{1,2\},\{3\}\}} = \nbd{E_1 \union E_2} + \nbd{E_3} = \rk{\M} - \ccw{(2,7)} - \ccw{(10,1)} + |[13,13]| - \cw{(13,13)} = 7 - 1 - 2 + 1 - 0 = 5.$
\item $\nbd{E,\{\{1,3\},\{2\}\}} = \nbd{E_1 \union E_3} + \nbd{E_2} = \rk{\M} - \ccw{(2,13)} - \ccw{(13,1)} + |[7,10]| - \cw{[7,10]} = 7 - 5 - 0 + 4 - 0 = 6.$
\item $\nbd{E,\{\{1\},\{2\},\{3\}\}} = \nbd{E_1} + \nbd{E_2} + \nbd{E_3} = |[1,2]| - \cw{[1,2]} + |[7,10]| - \cw{[7,10]} + |[13,13]| - \cw{[13,13]} = 2 - 1 + 4 - 0 + 1 - 0  = 6.$
\end{itemize}

Theorem~\ref{thm:rk} tells us that $\rk{E} = 5$.

Using Theorem~\ref{thm:rk}, in \cite{OX} it is shown that the facets of the matroid polytope are given by cyclic intervals whose complement is covered by CCW-arrows. It is also shown that the facets of the independendent set polytope of a positroid are given by sets whose complement is again covered by CCW-arrows. In \cite{CLOZ}, the condition for an arbitrary subset of the ground set being a flat of the positroid will be given in terms of the decorated permutation, again using Theorem~\ref{thm:rk}.

\bibliographystyle{plain}    
\bibliography{bibliography}

\end{document}